\documentclass[aap]{imsart}

\RequirePackage{amsthm,amsmath,amsfonts,amssymb}
\RequirePackage[numbers]{natbib}
\RequirePackage[colorlinks,citecolor=blue,urlcolor=blue]{hyperref}
\RequirePackage{graphicx}
\usepackage{mathrsfs}
\usepackage{enumerate}
\usepackage{enumitem}

\newcommand{\tree}[2]{\begin{array}{c} #1 \\[-3pt] #2 \end{array}}
\newcommand{\btree}[3]{\begin{array}{c} #1 \\[-3pt] #2 \\[-3pt] #3 \end{array}}
\newcommand{\ttree}[4]{\begin{array}{c} #1 \\[-3pt] #2 \\[-3pt] #3 \\[-3pt] #4 \end{array}}
\newcommand{\block}[1]{
  \underbrace{\begin{matrix}2 & 1 & 2 & 1 & \cdots \end{matrix}}_{#1}
}
\newcommand{\blockb}[1]{
  \underbrace{\begin{matrix}1212 \cdots \end{matrix}}_{#1}
}

\startlocaldefs
\theoremstyle{plain}

\newtheorem{theorem}{Theorem}[section]
\newtheorem{lemma}[theorem]{Lemma}
\newtheorem{corollary}[theorem]{Corollary}
\newtheorem{proposition}[theorem]{Proposition}
\newtheorem{remark}[theorem]{Remark}
\theoremstyle{remark}
\newtheorem{definition}[theorem]{Definition}


\endlocaldefs

\begin{document}

\begin{frontmatter}
\title{One-dependent colorings of the star graph}
\runtitle{One-dependent graphs}

\begin{aug}
\author[A]{\fnms{Thomas M.} \snm{Liggett}\ead[label=e1]{tml@math.ucla.edu}},
\and
\author[B]{\fnms{Wenpin} \snm{Tang}\ead[label=e3]{wt2319@columbia.edu}}
\address[A]{Department,
Department of Mathematics, UCLA,
\printead{e1}}

\address[B]{Department of Industrial Engineering and Operations Research, Columbia University,
\printead{e3}}
\end{aug}

\begin{abstract}
This paper is concerned with symmetric $1$-dependent colorings of the $d$-ray star graph $\mathscr{S}^d$ for $d \ge 2$.
We compute the critical point of the $1$-dependent hard-core processes on $\mathscr{S}^d$, which gives a lower bound for the number of colors needed for a $1$-dependent coloring of $\mathscr{S}^d$.
We provide an explicit construction of a $1$-dependent $q$-coloring for any $q \ge 5$ of the infinite subgraph $\mathscr{S}^3_{(1,1,\infty)}$, which is symmetric in the colors and whose restriction to any path is some symmetric $1$-dependent $q$-coloring.
We also prove that there is no such coloring of $\mathscr{S}^3_{(1,1,\infty)}$ with $q = 4$ colors.
A list of open problems are presented.
\end{abstract}

\begin{keyword}[class=MSC]
\kwd[Primary ]{60C05}
\kwd{05C15}
\kwd[; secondary ]{82B27}
\end{keyword}

\begin{keyword}
\kwd{Hard-core processes}
\kwd{random colorings}
\kwd{one-dependent processes}
\kwd{star graph}
\end{keyword}

\end{frontmatter}

\section{Introduction and main results}

Finitely dependent processes have received considerable interest in probability theory and dynamical systems. 
They have a variety of applications including 
economic theory \cite{Ibr05, Ibr09}, statistics \cite{AP14, GNW01}, queueing systems \cite{HG01, Sigman90}, stochastic geometry \cite{Hein13}, and biology \cite{FRT20, MKS16}.
Early work of Ibragimov and Linnik \cite{IL71} explored a representation of finitely dependent processes via block-factors.
A block-factor can be expressed as a finite-range function of a family of independent and identically distributed random variables.
It is straightforward that a block-factor is stationary finitely dependent, and they showed that the converse is true for Gaussian processes. 
This left the question of whether any stationary finitely dependent process is a block-factor for decades.
It was not until the late 1980s that Aaronson, Gilat, Keane and de Valk \cite{AGKK} discovered stationary $1$-dependent processes which are not $2$-block-factors.
There have now been a rich body of literature on finitely (and especially one-) dependent processes, 
with topics such as classical limit theory, examples in connection with integrable probability and combinatorial interpretations; 
see \cite{BDF, HL16} for an overview.

The constructions in \cite{AGKK} and many subsequent works are purely technical and counter-intuitive.
An important question is whether `natural' examples of  finitely dependent processes are all block-factors.
In fact, several works \cite{GH89, Svante84, Svante15} assumed a block-factor assumption in the study of finitely dependent processes: if natural finitely dependent processes are block-factors, then there is little to be lost by making this assumption.
In their groundbreaking work \cite{HL16}, Holroyd and Liggett considered stationary finitely dependent proper colorings,
providing the first natural finitely dependent processes that are not any block-factors.
Proper coloring has applications in computer science. 
Colors may represent time schedules for job tasks in a network, where any adjacent pair of job tasks are not permitted to conflict with each other.
Finite dependence introduces a security benefit -- an adversary can get access to information of some job tasks only within a fixed finite range.
To proceed further, we need a few vocabularies.
\begin{definition}
Let $G = (V, E)$ be a simple graph.
A stochastic process $X = (X_v)_{v \in V}$ indexed by the set of vertices $V$ is
\begin{itemize}[itemsep = 3 pt]
\item
a {\bf (proper) $q$-coloring} if each $X_v$ takes values in $[q]: = \{1,\ldots,q\}$ and almost surely $X_u \neq X_v$ whenever $u$ and $v$ are neighbors;
\item
{\bf $k$-dependent} if its restriction to two subsets of $V$ are independent whenever these subsets are at graph distance larger than $k$ from each other.
\end{itemize}
The process $X$ is finitely dependent if it is $k$-dependent for some $k \ge 1$, and it is a coloring if it is a $q$-coloring for some $q \ge 2$.
\end{definition}

In the last five years, there has been much effort in understanding stationary finitely dependent colorings of the integers, i.e. $V = \mathbb{Z}$ and $G$ is the infinite $2$-regular tree.
It is obvious that stationary finitely dependent $2$-coloring of $\mathbb{Z}$ fails to exist.
Holroyd, Schramm and Wilson \cite{HSW} showed that there is no stationary $1$-dependent $3$-coloring of $\mathbb{Z}$.
Holroyd and Liggett \cite{HL16} found a stationary $1$-dependent $4$-coloring and a stationary $2$-dependent $3$-coloring of $\mathbb{Z}$, 
which imply the existence of stationary $k$-dependent $q$-colorings for all $k \ge 1$ and $q \ge 3$ except $(k, q) = (1,3)$ by splitting a color into further colors using external randomness.
However, such constructed colorings are not symmetric under permutations of the colors for a general $q$.
Further in \cite{HL15}, they constructed for each $q \ge 4$ a stationary $1$-dependent $q$-coloring
which is invariant in law under permutations of the colors and under reflection.
Precisely, they provided the following remarkable deletion-concatenation recursion for the cylinder probability $P^*$ of the $q$-coloring ${\bf x} \in [q]^n$ (which represents a sequence of consecutive colors along a path):
\begin{equation}
\label{eq:DCR1} 
P^*({\bf x}) : = \frac{1}{D(n+1)} \sum_{j = 1}^n C(n- 2j + 1) P^*(\widehat{\bf x}_j),
\end{equation}
where $\widehat{\bf x}_j$ is obtained by deleting the $j^{th}$ entry of ${\bf x}$, 
and $C(n): = T_n(\sqrt{q}/2)$ for $n \ge 0$ and $D(n): = \sqrt{q}U_{n-1}(\sqrt{q}/2)$ for $n \ge 1$, 
with $T_n$ and $U_n$ the Chebyshev polynomials of the first and second kind respectively. 
More explicitly, 
\begin{subequations}
\label{eq:CD}
\begin{align}
&C(n) = \frac{1}{2} \left[ \left(\frac{\sqrt{q} + \sqrt{q - 4}}{2} \right)^n + \left(\frac{\sqrt{q} - \sqrt{q - 4}}{2} \right)^n \right]  & \mbox{for } n \ge 0, \\
& D(n) = \sqrt{\frac{q}{q-4}} \left[ \left(\frac{\sqrt{q} + \sqrt{q - 4}}{2} \right)^n - \left(\frac{\sqrt{q} - \sqrt{q - 4}}{2} \right)^n \right] & \mbox{for } n \ge 1.
\end{align}
\end{subequations}
Later Holroyd, Hutchcroft and Levy \cite{HHL20} gave a probabilistic construction of the $1$-dependent $q$-coloring defined by \eqref{eq:DCR1} along with other stationary finitely dependent colorings of $\mathbb{Z}$ using the Mallows permutations.
They also considered the problem of expressing a stationary finitely dependent coloring as a finitary factor of an independent and identically distributed sequence.
See \cite{GKZ19, HHL18} for other finitely dependent coloring models, 
and \cite{H17, Spin19} for recent progress on finitary factors.  

As the readers may notice, recent works of finitely dependent colorings focused mostly on the integer case and its obvious extensions.
For instance, it is easy to use the construction \eqref{eq:DCR1} to build a $1$-dependent $4^d$-coloring of $\mathbb{Z}^d$, 
which is stationary but is not invariant under all isometries of $\mathbb{Z}^d$.
In fact, very little is known about `fully' symmetric (or automorphism invariant) finitely dependent colorings of 
graphs such as $\mathbb{Z}^d$ for $d \ge 2$ and homogeneous trees.
One main open problem is to find an automorphism invariant $1$-dependent coloring of $\mathbb{Z}^d$ for each $d \ge 2$,
or more generally, of the $d$-regular tree for each $d \ge 3$.
Note that any automorphism invariant coloring of $\mathbb{Z}^d$ or the $d$-regular tree (if it exists), 
when restricted to a copy of $\mathbb{Z}$,
is a coloring of $\mathbb{Z}$ that is invariant under permutations of the colors. 
Moreover, it was conjectured in \cite{HHL20, HL16} that the distribution \eqref{eq:DCR1} defines the unique $1$-dependent coloring which is invariant in law under permutations of the colors, under translation and reflection. 
So it is natural to look for a $1$-dependent coloring of an infinite graph such that
\begin{enumerate}[itemsep = 3 pt]
\item[$(i)$]
the coloring is invariant in law under permutations of the colors;
\item[$(ii)$]
the restriction to any path is distributed as \eqref{eq:DCR1}.
\end{enumerate}

As pointed out in \cite{HL16}, there is a close connection between proper colorings and hard-core processes. 
A hard-core process on $G$ is a process $(J_v)_{v \in V} \in \{0,1\}^V$ such that almost surely we do not have $J_u = J_v = 1$ whenever $u$ and $v$ are neighbors.
So if $X$ is a $q$-coloring of $G$, then $J_v : = 1_{\{X_v = x\}}$ defines a hard-core process for any given color $x \in [q]$; 
if $X$ is $1$-dependent, then so is $J$.
As an immediate consequence, the number of colors needed for a $1$-dependent coloring of $G$ is bounded from below by 
$1/p_h(G)$, where 
\begin{equation}
\label{critical}
p_h(G): = \sup\{p: \exists \mbox{ 1-dependent hard-core process $J$ with } \mathbb{P}(J_v = 1) = p \,\, \forall v\},
\end{equation}
is referred to as the critical point.
Motivated by the study of the repulsive lattice gas, 
Scott and Sokal \cite{SS05,SS06} showed that for any infinite connected graph $G$ of maximum degree $d$, 
\begin{equation}
\label{bounds}
\frac{(d-1)^{d-1}}{d^d} \le p_h(G) \le \frac{1}{4} \quad \mbox{for all } d \ge 2. 
\end{equation}
The lower bound in \eqref{bounds} is known to be achieved by the $d$-regular tree \cite{Sh}, 
which is a version of the Lov\'{a}sz local lemma.
So any (symmetric) $1$-dependent coloring of the $d$-regular tree requires at least $d^d/(d-1)^{d-1}$ colors, e.g. $7$ colors for the $3$-regular tree, and $10$ colors for the $4$-regular tree or $\mathbb{Z}^2$.
For $d = 2$, the lower bound for the number of colors needed for a symmetric $1$-dependent coloring of $\mathbb{Z}$ is achieved by the construction \eqref{eq:DCR1}.
But it remains unknown whether one can find an automorphism invariant $1$-dependent $q$-coloring for any $q$ of the $d$-regular tree with $d > 2$.

The purpose of this paper is to provide further study of symmetric $1$-dependent colorings of general graphs.
As we will see later, this does not seem to be a simple task even going slightly beyond the integer case.
Defining cylinder probabilities via recursion similar to \eqref{eq:DCR1} is arguably the most direct and promising way to construct any $1$-dependent coloring of a graph.
However, the idea of deletion-concatenation is not easily replicated if some vertices have degree greater than $2$,
and the question is how to genuinely concatenate its neighbors once a vertex is deleted.
In view of this, we consider for any $d \ge 3$ the $d$-ray star graph and its subgraphs, 
which are building blocks for general graphs.
Particular focus will be placed on the $3$-ray star graph.

The $d$-ray graph $\mathscr{S}^d$ has a distinguished vertex of degree $d$ and all the others of degree $2$,
and it consists of $d$ rays ${\bf v}^1 = (v_{11}, v_{12}, \ldots)$, ${\bf v}^2 = (v_{21}, v_{22}, \ldots), \ldots$, and ${\bf v}^d = (v_{d1}, v_{d2}, \ldots)$, emanating from the distinguished vertex $v_0$. 
We also identify the $2$-ray star graph $\mathscr{S}^2$ with the integers. 
See Figure \ref{fig:3rstar} below for a $3$-ray star graph.
\begin{figure}[h]
\label{fig:3rstar}
\includegraphics[width=0.42 \textwidth]{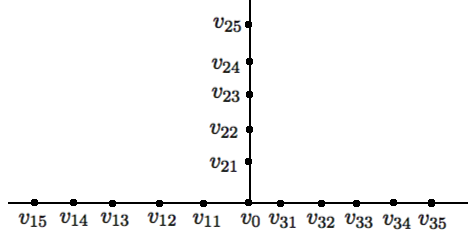}
\caption{The $3$-ray star graph}
\end{figure}
Let $\mathbb{N}: = \{1,2,\ldots\}$ be the set of positive integers.
For ${\bf n}: = (n_1,\ldots,n_d) \in \mathbb{N}^d$, let $\mathscr{S}^d_{\bf n}$ be the finite subgraph of the $d$-ray star graph, with $d$ rays of lengths $n_1, \ldots, n_d$ respectively.
Our first result gives an explicit formula for the critical point of $\mathscr{S}^d_{\bf n}$ for each $d \ge 2$ and ${\bf n} \in \mathbb{N}^d$, 
thus providing a lower bound for the number of colors needed for a symmetric $1$-dependent coloring of the $d$-ray star graph.

\begin{theorem}
\label{thm:main1}
For $d \ge 2$ and ${\bf n} \in \mathbb{N}^d$, the critical point $p_h(\mathscr{S}^d_{\bf n})$ is given by
\begin{equation}
\label{eq:phSS}
p_h(\mathscr{S}^d_{\bf n}) = \sup\left\{p: \, p \le \prod_{i = 1}^d \inf_{k \le n_i} a_k(p)\right\},
\end{equation}
where $(a_k(p); \, k \ge 1)$ is defined recursively by
\begin{equation}
\label{recsub}
a_1(p) = 1 - p, \quad \mbox{and} \quad a_k(p) + \frac{p}{a_{k-1}(p)} = 1 \mbox{ for } k \ge 2.
\end{equation}
Consequently, we have:
\begin{enumerate}[itemsep = 3 pt]
\item[(1)] 
The critical point $p_h(\mathscr{S}^d)$  
is the unique solution on $(0, 1/4]$ to the equation:
\begin{equation}
\label{eqph}
\left(\frac{1+ \sqrt{1-4p}}{2}\right)^d = p.
\end{equation}
\item[(2)]
Suppose that there exists a $1$-dependent $q$-coloring $X$ of the $d$-ray star graph $\mathscr{S}^d$ with $(X_v)_{v \in V}$ identically distributed. Then 
\begin{equation}
q \ge \frac{1}{p_h(\mathscr{S}^d)},
\end{equation}
where $p_h(\mathscr{S}^d)$ is given by \eqref{eqph}.
So there is no $1$-dependent $4$ coloring of the $3$-ray star graph $\mathscr{S}^3$,
which is invariant in law under permutations of the colors (i.e. satisfies (i)).
\end{enumerate}
\end{theorem}

\quad It is easy to derive from \eqref{eqph} that $p_h(\mathscr{S}^2) = 1/4$, $p_h(\mathscr{S}^3) = \sqrt{5} -2 $, 
$p_h(\mathscr{S}^4) = \frac{1}{3} \bigg(2 - 11$
$\left(\frac{2}{9 \sqrt{93} - 47} \right)^{\frac{1}{3}} + \left(\frac{9 \sqrt{93} - 47}{2} \right)^{\frac{1}{3}} \bigg)$,
and $p_h(\mathscr{S}^d) \sim  \ln d / d$ as $d \rightarrow \infty$.
Table $1$ below displays the numerical values of $p_h(\mathscr{S}^d)$ as $d$ ranges from $2$ to $11$.
\begin{center}
\begin{tabular}{ |c|c|c|c|c|c|c|c|c|c|c|} 
 \hline
 $d$ & 2 & 3 & 4 & 5 & 6 & 7 & 8 & 9 &10 & 11 \\ \hline
 $p_h(\mathscr{S}^d)$ & 0.250 & 0.236 & 0.217 & 0.199 & 0.185 & 0.173 & 0.162 & 0.153 & 0.149 & 0.138 \\
 \hline
\end{tabular}
 \\[5pt]
TABLE 1. Numerical values of $p_h(\mathscr{S}^d)$, $2 \le d \le 11$.
\end{center}
Theorem \ref{thm:main1} is essentially a consequence of \cite[Lemma 24]{HL16}, which is related to the smallest real zero of the partition function or the independence polynomial of the hard-core model. 
The explicit formula \eqref{eqph} for the critical point of $\mathscr{S}^d$ is new.
For ease of reference, we prove Theorem \ref{thm:main1} in Section \ref{sc3}.

Recall that the lower bound in \eqref{bounds} is achieved by the $d$-regular tree, 
and an interesting question is whether the upper bound in \eqref{bounds} for $d \ge 3$ can also be achieved by some subgraph of the $d$-regular tree with at least one vertex of degree $d$.
Since any infinite connected graph of maximum degree $d$ contains a copy of $\mathscr{S}^d_{(1,\ldots, 1, n)}$ for each $n$, we get the following corollary.

\begin{corollary}
For $d \ge 2$, let $G$ be an infinite connected graph of maximum degree $d$. 
Then
\begin{equation}
\label{newbounds}
\frac{(d-1)^{d-1}}{d^d} \leq p_h(G) \leq p_{\star}(d),
\end{equation}
where $p_{\star}(d) = \lim_{n \to \infty}p_h (\mathscr{S}^d_{(1,\ldots, 1, n)})$,
so
$p_{\star} (2) = p_{\star}(3) = 1/4$ and for $d \ge 4$, $p_{\star}(d)$ is the unique solution on $(0,1/4]$ to the equation:
\begin{equation}
\label{eq:110}
\frac{1}{2} (1-p)^{d-1} (1+\sqrt{1-4p}) = p.
\end{equation}
\end{corollary}

Note that the bounds in \eqref{newbounds} are tight, and the upper bound is improved for all $d \ge 4$.
The formula \eqref{eq:110} for $\lim_{n \to \infty}p_h (\mathscr{S}^d_{(1,\ldots, 1, n)})$ is derived similarly as
\eqref{eqph} for $p_h(\mathscr{S}^d)$ (see Section \ref{sc31}).
It is also easy to see that $(d-1)^{d-1}/d^d \sim 1/(ed)$ and $p_{\star}(d) \sim \ln d /d$ as $d \rightarrow \infty$.
Table $2$ below displays the numerical values of $p_{\star}(d)$ as $d$ ranges from $2$ to $11$.
\begin{center}
\begin{tabular}{ |c|c|c|c|c|c|c|c|c|c|c|} 
 \hline
 $d$ & 2 & 3 & 4 & 5 & 6 & 7 & 8 & 9 &10 & 11  \\ \hline
 $p_{\star}(d)$ & 0.250 & 0.250 & 0.245 & 0.229 & 0.212 & 0.197 & 0.183 & 0.172 & 0.162 & 0.153 \\
 \hline
\end{tabular}
\\[5pt]
TABLE 2. Numerical values of $p_{\star}(d)$, $2 \le d \le 11$.
\end{center}

Now we focus our attention on the $3$-ray star graph and its subgraphs.
Recall that our goal is to find a $1$-dependent coloring of the graph which satisfies the conditions $(i)$-$(ii)$.
According to Theorem \ref{thm:main1}, such a coloring requires at least $5$ colors.
But constructing such a $q$-coloring for any $q \ge 5$ still seems to be far-fetched.
A potentially less challenging task is to find a symmetric $1$-dependent coloring of the subgraph $\mathscr{S}^3_{(m,n,\infty)}$ for some finite $m,n$.
Again by Theorem \ref{thm:main1}, it is easy to get a lower bound for the number of colors required for the coloring, e.g.
\begin{itemize}[itemsep = 3 pt]
\item
for $(m, n) = (1,1)$ or $(1,2)$, the number of colors needed is at least $4$;
\item
for $m, n \ge 2$, the number of colors needed is bounded from below by $5$.
\end{itemize}
The next result shows that it is impossible to construct a $1$-dependent $4$-coloring of $\mathscr{S}^3_{(1,1,\infty)}$ and $\mathscr{S}^3_{(1,2,\infty)}$ which satisfies $(i)$-$(ii)$ above.
\begin{proposition}
\label{prop:main}
There is no $1$-dependent $4$-coloring of $\mathscr{S}^3_{(1,1,\infty)}$ (and hence $\mathscr{S}^3_{(1,2,\infty)}$) 
which satisfies (i)--(ii).
\end{proposition}

The proof of Proposition \ref{prop:main} is deferred to Section \ref{sc4}.
Note that Proposition \ref{prop:main} does not exclude the possibility of finding a $1$-dependent $4$ coloring of $\mathscr{S}^3_{(1,1,\infty)}$ or $\mathscr{S}^3_{(1,2,\infty)}$ with restriction to any copy of $\mathbb{Z}$ a symmetric $1$-dependent $4$-coloring of $\mathbb{Z}$. 
Proving or disproving this fact are both of interest. 
If one proves such a coloring is impossible, it gives the first example where the lower bound for the number of colors specified by the inverse critical point is not attained.
Otherwise, one can construct a $1$-dependent $4$-coloring whose restriction to ${\bf v}^1 v_0 {\bf v}^2$, ${\bf v}^1 v_0 {\bf v}^3$, ${\bf v}^2 v_0 {\bf v}^3$ is a symmetric coloring but different from that given by \eqref{eq:DCR1}.

So far we have not seen any example of $1$-dependent coloring of an infinite graph other than $\mathbb{Z}$, which satisfies $(i)$-$(ii)$.
Here we aim to construct the first such coloring of $\mathscr{S}^3_{(1,1,\infty)}$. 
Write $\tree{\bf x}{x_L \, x_0 \, x_R}$ for a generic $q$-coloring of $\mathscr{S}^3_{(1,1,n)}$, with ${\bf x}: = (x_1, \ldots, x_n) \in [q]^n$ and  $x_L, x_0, x_R \in [q]$.
Based on small cases computation, we arrive at the following construction which is in a similar flavor to \eqref{eq:DCR1} but in a more intricate way.

\begin{theorem}
\label{thm:main2}
Let $P(x_L x_0 x_R) = P^*(x_L x_0 x_R)$, and
for $n \ge 1$ and $\tree{\bf x}{x_L \, x_0 \, x_R}$ a proper coloring of $\mathscr{S}^3_{(1,1,n)}$, let
\begin{subequations}
\label{eq:DCR2}
\begin{align}
P\left(\tree{\bf x}{x_L \, x_0 \, x_R}\right) &= \frac{1}{C(n+2)D(n+1)}\left(\sum_{i = 1}^n C(2i + 1) P\Bigg(\tree{\widehat{\bf x}_i}{x_L \, x_0 \, x_R} \right) \notag \\
& \qquad \qquad \qquad \qquad \quad + C(1) P\left(\tree{\widehat{\bf x}_1}{x_L \, x_1 \, x_R} \right)\Bigg)  \quad  \mbox{ if } x_L \ne x_R, \label{eq:DCR21} \\
P\left(\tree{\bf x}{x_L \, x_0 \, x_R}\right) &=  P^*(x_L x_0 {\bf x}) - \sum_{x'_R \in [q] \setminus \{x_0, x_R\}} P\left(\tree{\bf x}{x_L \, x_0 \, x'_R}\right) \qquad \mbox{if } x_L = x_R, \label{eq:DCR22}
\end{align}
\end{subequations}
where $C(\cdot)$, $D(\cdot)$ are defined by \eqref{eq:CD}, and $P({\bf y})$ with ${\bf y} \in [q]^{k}$ for some $k$ is given by \eqref{eq:DCR1}.
Let $P\left(\tree{\bf x}{x_L \, x_0 \, x_R}\right) = 0$ if $\tree{\bf x}{x_L \, x_0 \, x_R}$ is not a proper coloring.
Then for each $q \ge 5$,
$P(\cdot)$ defines a $1$-dependent $q$-coloring of $\mathscr{S}^3_{(1,1,\infty)}$ which satisfies (i)--(ii).
\end{theorem}

The proof of Theorem \ref{thm:main2} is given in Section \ref{sc5}.
Here are a few examples of cylinder probabilities:
$P(121) = \frac{1}{q^2(q-1)}$, $P(123) = \frac{1}{q^2(q-2)}$,
\begin{align*}
& P\left(\tree{1}{1 \,2 \,1} \right) = \frac{1}{q^3(q-1)}, \quad P\left(\tree{3}{1 \,2 \,1} \right) = \frac{1}{q^3(q-2)}, \quad P\left(\tree{4}{1 \,2 \,3} \right) = \frac{1}{q^3(q-3)}, \\
& P\left(\tree{\,\,\,\,1}{1 \,2 \,3 \, 4} \right) = \frac{1}{q^3(q-1)(q-3)}, \quad P\left(\tree{\,\,\,\,1}{1 \,2 \,3 \, 1} \right) =P\left(\tree{\,\,\,\,1}{1 \,2 \,3 \, 2} \right) = \frac{1}{q^3(q-1)(q-2)}.
\end{align*}

Note that the construction \eqref{eq:DCR2} distinguishes whether $x_L = x_R$ or not.
The case $x_L \ne x_R$ is specified by a deletion-concatenation recursion \eqref{eq:DCR21} 
which is similar to \eqref{eq:DCR1}.
Additional difficulties come from the subtraction recursion \eqref{eq:DCR22}, where the nonnegativity is also not obvious.
Theorem \ref{thm:main2}, together with Proposition \ref{prop:main} give a whole picture of symmetric $1$-dependent colorings of $\mathscr{S}^3_{(1,1, \infty)}$ which satisfy the additional condition (ii). 
However, we do not know any symmetric coloring of $\mathscr{S}^3_{(1,1, \infty)}$ which does not satisfy (ii).
The situation is even more complicated for $1$-dependent colorings of $\mathscr{S}^3_{(m, n, \infty)}$ 
where at least one of $m, n$ is greater than $1$.
It remains unknown whether one can find a $1$-dependent $q$-coloring for any $q \ge 5$ of $\mathscr{S}^3_{(1,2,\infty)}$ which satisfies the conditions $(i)$-$(ii)$.
We hope this work can provide some insights into symmetric $1$-dependent colorings of general graphs, and trigger further research in this direction.

\medskip
{\bf Organization of the paper.}
In Section \ref{sc2}, we provide background on the coefficients $C(n)$ and $D(n)$ used in our construction of $1$-dependent coloring. We also explain where these formulas come from.
In Section \ref{sc3}, we give a proof of Theorem \ref{thm:main1} regarding the $1$-dependent hard-core processes.
Proposition \ref{prop:main} is proved in Section \ref{sc4}.
In Section \ref{sc5}, we prove Theorem \ref{thm:main2} which is the main result of this paper.
We conclude with a list of open problems in Section \ref{sc6}.

\section{Background on the Chebyshev coefficients}
\label{sc2}

This section provides background on the `mysterious' Chebyshev coefficients $C(n)$ and $D(n)$,
which appear in the constructions \eqref{eq:DCR1} and \eqref{eq:DCR2} of $1$-dependent colorings of $\mathbb{Z}$ and $\mathscr{S}^3_{(1,1,\infty)}$ respectively.
We also give hints of these constructions.
To start, we record some initial values of $C(n)$ and $D(n)$.
\begin{align*}
&C(0) = 1, \quad C(1) = \frac{\sqrt{q}}{2}, \quad C(2) = \frac{q-2}{2}, \quad C(3) = \frac{\sqrt{q}(q-3)}{2}, \quad C(4) = \frac{q^2 - 4q + 2}{2}; \\
&D(0) = 0, \quad D(1) = \sqrt{q}, \quad D(2) = q, \quad D(3) = \sqrt{q}(q-1), \quad D(4) = q(q-2).
\end{align*}
Note that $C(\cdot)$ is an even function, and $D(\cdot)$ is an odd function.
Moreover, $C(n)$ and $D(n)$ satisfy the recursion
\begin{equation}
\label{eq:recCD}
\mathscr{A}(n+2) - \sqrt{q} \, \mathscr{A}(n+1) + \mathscr{A}(n) = 0 \quad \mbox{for } \mathscr{A} = C, \, D.
\end{equation}

The following lemma collects a few useful identities of $C(n)$ and $D(n)$, which we will use later.
The results are read from \cite{HL15}, which are consequences of the recursion \eqref{eq:recCD}.
\begin{lemma}
For $j, \, k, \, \ell, \, m, \, n \in \mathbb{Z}$ and ${\bf x}$ a $q$-coloring of length $N > 0$, we have the following identities:
\begin{align}
& 2C(m)C(n) = C(m+n) + C(n-m), \label{eq:HL0} \\[3 pt]
& 2 C(m) D(n) = D(m+n) + D(n-m), \label{eq:HL1} \\[3 pt] 
&C(j+ k) D(k+ \ell) = C(k) D(j+k+\ell) - C(\ell) D(j), \label{eq:HL2} \\
& \sum_{i = 1}^N C(2i) P(\widehat{\bf x}_i) = C(N+1) D(N+1) P(\bf{x}). \label{eq:HL3}
\end{align}
where $P(\cdot)$ is the probability measure defined by \eqref{eq:DCR1}.
\end{lemma}

Here we explain why the constructions \eqref{eq:DCR1} and \eqref{eq:DCR2} are expected.
For a coloring ${\bf x}$ of $\mathbb{Z}$, assume that the recursion is of form
\begin{equation}
\label{eq:26}
P({\bf x}) = \sum_{i = 1}^n c_i P(\widehat{\bf x}_i).
\end{equation}

The deletion-concatenation recursion is similar in spirit to the deletion-contraction recursion, which is a powerful tool in enumerative combinatorics.
Given ${\bf x} = (x_1, \ldots, x_n)$ ${\bf y} = (y_1, \ldots, y_m)$ with $x_n \ne y_1$, 
the one-dependence condition implies that
$\sum_{a \ne x_n, y_1} P({\bf x} a {\bf y}) = P({\bf x})  P({\bf y})$.
By induction, we write
\begin{align}
\label{eq:27}
\sum_{a \ne x_n, y_1} P({\bf x} a {\bf y}) & = \sum_{i = 1}^n \sum_{a \ne x_n, y_1} c_i P(\widehat{\bf x}_ia{\bf y}) + (q-2) c_{n+1}P({\bf x y})  + \sum_{i = n+2}^{n+m+1} c_i P({\bf x}a\widehat{\bf y}_i)  \notag \\
& = \sum_{i = 1}^n c_i P(\widehat{\bf x}_i) P({\bf y}) + (-c_{n} + (q-2) c_n + c_{n+1}) P({\bf xy}) + \sum_{i = n+2}^{n+m+1} c_i P({\bf x}) P(\widehat{\bf y}_i).
\end{align}
Note that the first and the last term in \eqref{eq:27} can easily create the term $P({\bf x}) P({\bf y})$,
so the only annoying term is the middle one. Thus, we need to kill this term by sending its coefficient to zero.
For this reason, the coefficients $c_i$ in \eqref{eq:26} are forced to satisfy:
\begin{equation}
c_{i + 2} - (q-2) c_{i+1} +c_i = 0 \quad \mbox{for } 0 \le i \le n-1,
\end{equation}
with the convention $c_0 = c_{n+1}: = 0$.
Further by symmetry under reflection and small cases computation, the coefficients $c_i$ are uniquely determined which is given by \eqref{eq:DCR1}.
Similarly, for a coloring of $\tree{\bf x}{x_L \, x_0 \, x_R}$ of $\mathscr{S}^3_{1,1, \infty}$ with $x_L \ne x_R$, if assuming
\begin{equation*}
P \left( \tree{\bf x}{x_L \, x_0 \, x_R}\right) = \sum_{i = 1}^n c_i P\left(\tree{\widehat{\bf x}_i}{x_L \, x_0 \, x_R} \right)
+ c_0 P\left(\tree{\widehat{\bf x}_1}{x_L \, x_1 \, x_R} \right),
\end{equation*}
then the coefficients $c_i$ are expected to satisfy:
\begin{equation}
c_{i + 2} - (q-2) c_{i+1} +c_i = 0 \quad \mbox{for } 1 \le i \le n-1, \quad \mbox{and} \quad c_1 = (q-3)c_0,
\end{equation}
where the additional condition $c_1 = (q-3)c_0$ is due to the one-dependence condition at the distinguished vertex of degree $3$.
Again by small cases computation, the coefficients $c_i$ are uniquely determined, which is given as in \eqref{eq:DCR21}.

\section{$1$-dependent $q$-coloring of $\mathscr{S}^d$}
\label{sc3}
In this section, we prove Theorem \ref{thm:main1}.
In Section \ref{sc31}, we provide an elementary probabilistic proof of Theorem \ref{thm:main1}.
We also discuss how to derive the results from graph theory.
In Section \ref{sc32}, we give an alternative proof that there is no $1$-dependent $4$-coloring of the $3$-ray star graph.

\subsection{The critical point of the $d$-ray star graph}
\label{sc31}
For ${\bf x} = (x_v; \, v \in \mathscr{S}^d_{\bf n}) \in \{0,1\}^{\mathscr{S}^d_{\bf n}}$ with $d$ rays
${\bf x}_1:= (x_{11}, \ldots, x_{1n_1}), \ldots, {\bf x}_d:=(x_{d1}, \ldots, x_{dn_d})$ emanating from $x_0$, write
\begin{equation*}
Q({\bf x}) : = \mathbb{P}(J_{v} = x_v \mbox{ for all } v \in \mathscr{S}^d_{\bf n}).
\end{equation*}

If there exists a $1$-dependent hard-core process $J$ on $\mathscr{S}^d$ with marginals $\mathbb{P}(J_v = 1) = p$ for all $v$, 
then the collection $(Q({\bf x}); \, {\bf x} \in \{0,1\}^{\mathscr{S}^d_{\bf n}}, {\bf n} \in \mathbb{N}^d)$ is a nonnegative solution to the 1-dependence equations at all internal vertices, and the consistency equations at all $d$ leaves.
The following proposition shows that these cylinder probabilities are uniquely determined by the 1-dependence and consistency conditions.

\begin{proposition}
\label{keylem}
 Let $Q^*(\cdot)$ be the cylinder probabilities of the 1-dependent hard-core process on $\mathbb{Z}$ with marginals $p$.
For each ${\bf x} \in \{0,1\}^{\mathscr{S}^d_{\bf n}}$,
define
\begin{equation}
\label{Pformula}
Q({\bf x}) = \left\{ \begin{array}{ccl}
\prod_{i=1}^d  Q^*({\bf x}_i) & \mbox{if } & x_0 = 0 \mbox{ and } x_{i1} = 1 \mbox{ for some } i,
 \\[3 pt] 
 p  \prod_{i=1}^d Q^*(\widehat{{\bf x}}_{i1})& \mbox{if} & x_0 = 1 \mbox{ and } x_{i1} = 0 \mbox{ for all } i, \\[3 pt]
\prod_{i=1}^d Q^*({\bf x}_i) - p  \prod_{i=1}^d Q^*(\widehat{{\bf x}}_{i1}) & \mbox{if} & x_0 = 0 \mbox{ and } x_{i1} = 0 \mbox{ for all } i,
\end{array}\right.
\end{equation}
where $\widehat{{\bf x}}_{i1} : = (x_{i2}, \ldots, x_{in_i})$ is obtained by deleting the first entry of ${\bf x}_i$, 
with the convention $Q^*(\emptyset) = 1$.
If \begin{equation}
\label{nonnegative}
\prod_{i=1}^d Q^*({\bf x}_i) - p  \prod_{i=1}^d Q^*(\widehat{{\bf x}}_{i1}) \geq 0 \quad \mbox{for all } {\bf x}_i \in \{0\} \times \{0,1\}^{n_i} \mbox{ with } n_i \ge 0,
\end{equation}
then the collection $(Q({\bf x}); \, {\bf x} \in \{0,1\}^{\mathscr{S}^d_{\bf n}}, {\bf n} \in \mathbb{N}^d)$ given by \eqref{Pformula} defines
the unique $1$-dependent hard-core process $J$ on $\mathscr{S}^d$ with marginals $p$.
Conversely, if the condition \eqref{nonnegative} fails, then there is no 1-dependent hard-core process with marginals $p$.
\end{proposition}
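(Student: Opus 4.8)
The plan for the forward (necessity) direction is to read off the three cases of \eqref{Pformula} from the $1$-dependence of $J$ together with the hard-core constraint, the only tool being that $J|_A$ and $J|_B$ are independent whenever $A$ and $B$ are vertex sets at graph distance at least $2$. In $\mathscr{S}^d$ the first-level neighbours $v_{11},\ldots,v_{d1}$ are pairwise at distance $2$, so the $d$ rays are mutually independent and $\mathbb{P}(\text{ray } i = {\bf x}_i \text{ for all } i)=\prod_{i=1}^d P({\bf x}_i)$. I then resolve the centre by conditioning on $x_0$. If $x_{i1}=1$ for some $i$, the hard-core rule forces $x_0=0$, so this event already fixes the centre and $P({\bf x})=\prod_i P({\bf x}_i)$. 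If $x_0=1$ and all $x_{i1}=0$, then $\{J_{v_0}=1\}$ by itself makes every neighbour $0$, so the event coincides with $\{J_{v_0}=1\}\cap\{\text{tails}=(\widehat{\bf x}_{11},\ldots,\widehat{\bf x}_{d1})\}$; since $v_0$ lies at distance $2$ from the union of the tails $\bigcup_i\{v_{i2},v_{i3},\ldots\}$, which are themselves pairwise at distance $4$, this factors as $p\prod_i P(\widehat{\bf x}_{i1})$. Summing the $x_0=0$ and $x_0=1$ probabilities must return the ray product $\prod_i P({\bf x}_i)$, so the third case follows by subtraction.

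The single-ray factors $P({\bf x}_i)$ are the finite-dimensional laws of the restriction of $J$ to one ray, i.e.\ of a $1$-dependent hard-core process on the half-line with marginal $p$. Running the same peeling argument along the half-line, or invoking the already-understood integer case $\mathscr{S}^2$ together with \cite[Lemma 23]{HL16}, shows these are themselves uniquely determined by $p$ (through the recursion \eqref{recsub}); hence every cylinder probability of $J$ is pinned down, which finishes the necessity direction.

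For the converse I would define $P$ on each cylinder by the right-hand side of \eqref{Pformula}, assigning $0$ to any configuration with two adjacent $1$'s (so the first case is used only when $x_0=0$), and take the ray factors to be the stationary half-line probabilities above, which for $p\in(0,1/4]$ exist as a restriction of the $\mathbb{Z}$-process and are therefore translation-invariant, non-negative, consistent, hard-core and $1$-dependent. Non-negativity of $P$ is then immediate in the first two cases and is exactly the hypothesis \eqref{nonnegative} in the third. The family is consistent: summing over $x_0$ gives $p\prod_i P(\widehat{\bf x}_{i1})+\big(\prod_i P({\bf x}_i)-p\prod_i P(\widehat{\bf x}_{i1})\big)=\prod_i P({\bf x}_i)$, which matches the centre-marginalised law, while summing over a leaf of ray $i$ leaves the case-type unchanged and marginalises the factors $P({\bf x}_i)$ and $P(\widehat{\bf x}_{i1})$ correctly by leaf-consistency of the ray process; the degenerate case $n_i=1$, in which a ray shrinks to length $0$, is absorbed by the convention $P(\emptyset)=1$ in an induction on $\sum_i n_i$. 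By Kolmogorov extension these determine a process $J$, whose centre marginal is $p$ from the empty-ray instance, whose ray marginals are $p$ by consistency, and which is hard-core by construction.

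The remaining and most delicate requirement is to check that the constructed $J$ is genuinely $1$-dependent, that is $J|_A$ and $J|_B$ are independent whenever $d(A,B)\ge 2$. I would reduce this to a factorisation of the cylinder probabilities across a separating vertex: deleting an internal vertex splits $\mathscr{S}^d_{\bf n}$ into pieces, and one shows by induction on $\sum_i n_i$, directly from \eqref{Pformula}, that the law factorises over these pieces — the separation across the centre coming from the product form of the three cases, and the separation inside a ray from the $1$-dependence of the half-line process (equivalently, that $P$ solves the $1$-dependence equations at every internal vertex). I expect this inductive factorisation to be the main obstacle, since it must hold simultaneously for all distant pairs and it is precisely here that the non-negativity \eqref{nonnegative} is propagated through the recursion; once it is established, the hard-core property and the prescribed marginals follow as bookkeeping.
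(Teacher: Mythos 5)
Your proof is correct and takes essentially the same route as the paper: the forward direction applies the $1$-dependence property at the centre and at the first-level vertices to derive the three cases of \eqref{Pformula}, with the third case obtained by subtraction, exactly as the paper does (the paper peels the rays off one at a time at $v_{11},\dots,v_{d1}$ where you instead invoke joint independence of $J_{v_0}$ and the tails --- the same mechanism). For the converse, the paper itself only asserts that consistency and $1$-dependence of the collection \eqref{Pformula} are easy to check and leaves the details to the reader, so your more explicit sketch --- non-negativity from \eqref{nonnegative}, consistency by summing over $x_0$ and over leaves, Kolmogorov extension, plus the inductive factorisation across separating vertices that you candidly flag as the remaining work --- covers at least as much ground as the paper's own argument.
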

\begin{proof}
Suppose that there is a $1$-dependent hard-core process on $\mathscr{S}^d$ with marginals $p$.
For ${\bf x} \in \{0,1\}^{\mathscr{S}^d_{\bf n}}$, there are three cases.

\smallskip
{\bf Case 1}: $x_0 = 0$ and one of the neighbors to $x_0$ is $1$. By the $1$-dependence condition at $x_0$, we get the desired result.

\smallskip
{\bf Case 2}: $x_0 = 1$ and all the neighbors to $x_0$ are $0$. By the $1$-dependence condition at $x_{11}$, we have
\begin{equation*}
Q({\bf x}) = Q^*(\widehat{\bf x}_{11}) Q^*(\widehat{\bf x}^1) = Q^*(\widehat{\bf x}_{11}) \cdot p  \prod_{i=2}^d Q^*(\widehat{{\bf x}}_{i1}),
\end{equation*}
where $\widehat{\bf x}^1$ is obtained by deleting the ray ${\bf x}_1$ from ${\bf x}$, and the second equality follows by applying the $1$-dependence condition successively at $x_{21}, \ldots, x_{d1}$.

\smallskip
{\bf Case 3}: $x_0 = 0$ and all the neighbors to $x_0$ are $0$. By the $1$-dependence condition at $x_0$ and the result in Case 2, we get the desired result.

It is easy to check that the collection $(Q({\bf x}); \, {\bf x} \in \{0,1\}^{\mathscr{S}^d_{\bf n}}, {\bf n} \in \mathbb{N}^d)$ given by \eqref{Pformula} satisfies the 1-dependence and consistency conditions. We leave the full detail to readers. 
The non-negativity condition is guaranteed by \eqref{nonnegative}. We conclude by applying Kolmogorov's extension theorem.
\end{proof}

\quad As a consequence of Proposition \ref{keylem}, we have
\begin{equation}
\label{criticalrep1}
p_h(\mathscr{S}^d) = \sup \left\{p; \, p \le \prod_{i=1}^d \frac{Q^*({\bf x}_i)}{Q^*(\widehat{{\bf x}}_{i1})}
\mbox{ for all } {\bf x}_i \in \{0\} \times \{0,1\}^{n_i} \mbox{ with }  n_i < \infty \right\}.
\end{equation}

For ${\bf x} \in \{0\} \times \{0,1\}^n \setminus \{0\}^{n+1}$, let
\begin{equation*}
k({\bf x}): = \inf\{k; \, x_k = 1\} - 1
\end{equation*}
be the number of $0$'s before the first $1$ in ${\bf x}$. 
Write ${\bf x} = 0_{k({\bf x})} {\bf S}$ with ${\bf S} \in \{1\} \times \{0,1\}^{n-k({\bf x})}$. 
By the $1$-dependence condition at the $k^{th}$ position, we have
\begin{equation*}
Q^*({\bf x}) = Q^*(0_{k({\bf x})-1}) Q^*({\bf S}),
\end{equation*}
with the convention $Q^*(0_0) : =1$.
So for ${\bf x} \in \{0\} \times \{0,1\}^n \setminus \{0\}^{n+1}$, 
\begin{equation}
\label{zero}
\frac{Q^*({\bf x})}{Q^*(\widehat{{\bf x}}_{1})} = \left\{ \begin{array}{ccl}
1 & \mbox{if } & k({\bf x}) = 1
 \\ 
Q^*(0_{k({\bf x})-1})/ Q^*(0_{k({\bf x})-2}) & \mbox{if} & k({\bf x}) \ge 2.
\end{array}\right.
\end{equation}

It follows from \eqref{criticalrep1} and \eqref{zero} that
\begin{equation}
\label{criticalrep2}
p_h(\mathscr{S}^d_{\bf n}) =  \sup \left\{p ; \,  p \le \prod_{i = 1}^d \inf_{k \le n_i} \frac{Q^*(0_{k})}{Q^*(0_{k-1})} \right\}.
\end{equation}

Thus the value of $p_h(\mathscr{S}^d_{\bf n})$ is entirely determined by the sequence $(Q^*(0_k); \, k \ge 0)$.
The following lemma gives an explicit recursion of the sequence $(Q^*(0_k); \, k \ge 0)$.
\begin{lemma}
For each $k \ge 2$,
\begin{equation}
\label{recurrence}
Q^*(0_k) = Q^*(0_{k-1}) - p Q^*(0_{k-2}).
\end{equation}
In particular,
\begin{equation*}
Q^*(0_1) = 1-p, \quad Q^*(0_2) = 1 - 2p, \quad Q^*(0_3) = 1-3p+p^2, \quad Q^*(0_4) = 1 - 4p + 3p^2 \ldots
\end{equation*}
\end{lemma}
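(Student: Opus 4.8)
The plan is to derive the recurrence by splitting $P(0_{k-1})$ according to the state of one extra vertex along the ray, and then to evaluate the resulting ``occupied-endpoint'' probability using the factorization established in the paragraph immediately preceding the lemma. Throughout I write $0_m$ for the configuration assigning $0$ to the first $m$ vertices $v_1,\ldots,v_m$ of a ray, and $0_{k-1}1$ for the configuration with $v_1=\cdots=v_{k-1}=0$ and $v_k=1$.

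First I would fix $k\ge 2$ and partition the event $\{v_1=\cdots=v_{k-1}=0\}$ according to whether $v_k=0$ or $v_k=1$. By finite additivity of the cylinder probabilities this gives
\[
P(0_{k-1}) = P(0_k) + P(0_{k-1}1),
\]
which requires no structural input. Next I would evaluate $P(0_{k-1}1)$. The string $0_{k-1}1$ lies in $\{0\}\times\{0,1\}^{k-1}\setminus\{0\}^{k}$, with $k(\mathbf{x})=k-1$ and tail $\mathbf{R}=(1)$, so the identity $P(\mathbf{x})=P(0_{k(\mathbf{x})-1})P(\mathbf{R})$ derived just above applies and yields
\[
P(0_{k-1}1) = P(0_{k-2})\,P(1) = p\,P(0_{k-2}),
\]
where I use the marginal $P(1)=\mathbb{P}(J_v=1)=p$.

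Combining the two displays immediately produces $P(0_k) = P(0_{k-1}) - p\,P(0_{k-2})$ for every $k\ge 2$, which is the claimed recurrence. The tabulated values then follow by iterating from the base cases $P(0_0)=P(\emptyset)=1$ and $P(0_1)=\mathbb{P}(J_v=0)=1-p$: for instance $P(0_2)=(1-p)-p=1-2p$, then $P(0_3)=(1-2p)-p(1-p)=1-3p+p^2$, and so on. I expect no real obstacle beyond bookkeeping; the only point needing care is the boundary case $k=2$, where the factorization identity must be invoked with the empty-string convention $P(0_0)=1$ (equivalently, one uses that the site preceding an occupied vertex is forced to $0$ by the hard-core constraint, which is precisely what legitimizes reading $0_{k-1}1$ off the already-proved identity).
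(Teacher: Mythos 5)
Your proof is correct and follows essentially the same route as the paper: the identity $P(0_{k-1}) = P(0_k) + P(0_{k-1}1)$ is exactly the consistency condition, and your evaluation $P(0_{k-1}1) = P(0_{k-2})P(1) = p\,P(0_{k-2})$ via the factorization $P(\mathbf{x}) = P(0_{k(\mathbf{x})-1})P(\mathbf{R})$ is just the paper's direct application of the $1$-dependence condition at the $(k-1)^{\text{th}}$ position. Your handling of the boundary case $k=2$ via the convention $P(0_0)=1$ is also consistent with the paper.
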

\begin{proof}
By the consistency condition of $0_k$, we have
\begin{equation*}
Q^*(0_k)  = Q^*(0_{k-1}) - Q^*(0_{k-1}1).
\end{equation*}
Further by the 1-dependence condition of $0_{k-1}1$ at the $k-1^{th}$ position, we get
\begin{equation*}
Q^*(0_{k-1}1) = Q^*(0_{k-2}) Q^*(1),
\end{equation*}
which yields \eqref{recurrence}.
\end{proof}

For $k \ge 1$, let $a_k(p) : = Q^*(0_k)/Q^*(0_{k-1})$.
The recursion \eqref{recurrence} implies \eqref{recsub}, which proves the formula \eqref{eq:phSS}.
Specializing to the $d$-ray star graph $\mathscr{S}^d$, we get
\begin{equation}
\label{criticalrepe3}
p_h(\mathscr{S}^d) =  \sup \left\{p ; \,  p \le  \left(\inf_{k \ge 1} a_k(p)\right)^d \right\}.
\end{equation}
It follows by standard analysis that for $p \le 1/4$, the sequence $a_k$ decreases to the limit $(1+ \sqrt{1-4p})/2$.
Combining this with \eqref{criticalrepe3} yields the equation \eqref{eqph}.

\begin{remark}
As mentioned in the introduction, Theorem \ref{thm:main1} (and Proposition \ref{keylem}) can be derived from known results in graph theory. 
For $G$ a finite graph, consider the (modified) independence polynomial
\begin{equation*}
Z_G(x) = \sum_{k = 0}^n (-1)^k i_k(G) x^k,
\end{equation*}
where $i_k(G)$ is the number of independent sets of size $k$ with the convention $i_0(G) = 1$.
Then $p_h(G)$ is the smallest real zero of $Z_G(x)$.
For an infinite graph, it suffices to take the infimum of the smallest real zeros of the finite subgraphs of $G$.
It is well known that $Z_{G}(x)$ satisfies the recursion
\begin{equation*}
Z_G(x) = Z_{G \setminus v}(x) - x Z_{G \setminus N[v]}(x),
\end{equation*}
where $N[v] =\{v\} \cup N(v)$, that is, vertex $v$ together with its set of neighbors $N(v)$.
Specializing this recursion to $\mathscr{S}^d_{\bf n}$, we get
\begin{equation}
\label{eq:38}
Z_{\mathscr{S}^d_{\bf n}}(x) = \prod_{i = 1}^d Z_{P_{n_i}}(x) - x \prod_{i = 1}^d Z_{P_{n_i-1}}(x),
\end{equation}
where $P_k$ is the path on $k$ vertices. 
The formula \eqref{eq:38} is essentially a reformulation of Proposition \ref{keylem}, from which follows Theorem \ref{thm:main1}.
\end{remark}

\subsection{No symmetric $1$-dependent $4$-coloring of the $3$-ray star graph}
\label{sc32}
Here we give a direct proof that there is no $1$-dependent $4$-coloring of the the $3$-ray star graph
which satisfies $(i)$ and 
\begin{enumerate}
\item[($ii'$)]
the restriction to any path is given by a proper coloring which is invariant under permutations of the colors, 
under translation and reflection.
\end{enumerate}
The distribution of such a proper coloring of $\mathbb{Z}$ is denoted as $\widetilde{P}^*$.

Suppose by contradiction that such a coloring exists. 
Let $\bf T$ and $\bf T'$ be two colorings of $\mathcal{S}^3_{(2,2,4)}$ as follows:
\begin{equation}
{\bf T}:= \, 
\begin{matrix} 
   &    &    &    & 1 &    &  \\
   &    &    &    & 2 &    &  \\
1 & 2 & 3 & 4 & 1 & 3 & 2 
\end{matrix} 
\quad \mbox{and} \quad 
{\bf T}':= \,
\begin{matrix} 
   &    &    &    & 1 &    &  \\
   &    &    &    & 2 &    &  \\
1 & 2 & 3 & 2 & 1 & 3 & 2 
\end{matrix} 
\end{equation}

\quad The $1$-dependence condition of $\bf T$ at the distinguished vertex of degree $3$ implies that
\begin{equation*}
P({\bf T}) =  \widetilde{P}^*(1234)  \widetilde{P}^*(12)  \widetilde{P}^*(32)  =  P^*(1234) ( \widetilde{P}^*(12))^2,
\end{equation*}
while the $1$-dependence condition of $\bf T$ at the left neighbor to the distinguished vertex implies that
\begin{equation*}
P({\bf T}) + P({\bf T'}) =  \widetilde{P}^*(123)   \widetilde{P}^*(12132).
\end{equation*}
Therefore,
\begin{equation}
\label{cond2}
 \widetilde{P}^*(123) \widetilde{P}^*(12132) \ge \widetilde{P}^*(1234) (\widetilde{P}^*(12))^2.
\end{equation}
Recall from \cite{HL15} that for a $1$-dependent $4$-coloring of the integers which is invariant under permutations of the colors, under translation and reflection, 
the cylinder probabilities of length $k \le 4$ are given by
\begin{equation}
\label{cyl4}
\begin{split}
&  \widetilde{P}^*(1) = \frac{1}{4}, \quad  \widetilde{P}^*(12) = \frac{1}{12}, \quad  \widetilde{P}^*(121) = \frac{1}{48}, \quad \widetilde{P}^*(123) = \frac{1}{32}, \\
&  \widetilde{P}^*(1212) = \frac{\alpha}{48}, \quad  \widetilde{P}^*(1213) = \frac{1-\alpha}{96}, \quad  \widetilde{P}^*(1231) = \frac{1}{96}, \quad \widetilde{P}^*(1234) = \frac{1+\alpha}{96}.
\end{split}
\end{equation}
So the cylinder probabilities of length $k \le 3$ are uniquely determined, while those of length $k = 4$ are given by a one parameter family indexed by $\alpha$. 

Now we consider the cylinder probabilities of length $k = 5$. 
There are $10$ equivalent classes under permutations of colors and under reflection: $12134$, $12314$, $12324$, $12341$, $12131$, $12123$, $12132$, $12312$, $12321$ and $12121$. 
Solving the $1$-dependence and consistency conditions for these cylinder probabilities gives the following result.

\begin{lemma}
For a $1$-dependent $4$-coloring of $\mathbb{Z}$ which is invariant in law under permutations of the colors, under translation and reflection, with the cylinder probabilities of length $k \le 4$ given by \eqref{cyl4}, the cylinder probabilities of length $k = 5$ are determined by
\begin{align}
& \widetilde{P}^*(12134) = \frac{1}{288}, \,\, \widetilde{P}^*(12314) = \frac{5}{1152}, \,\,  \widetilde{P}^*(12324) = \frac{1}{288}, \,\, \widetilde{P}^*(12341) = \frac{1 + 4 \alpha}{384}, \notag \\
& \widetilde{P}^*(12131) = \frac{5 - 12 \alpha}{1152}, \,\, \widetilde{P}^*(12123) = \frac{1}{576}, \,\,  \widetilde{P}^*(12132) = \frac{1}{384}, \,\,  \widetilde{P}^*(12312) = \frac{1}{288}, \\
& \widetilde{P}^*(12321) = \frac{1-2 \alpha}{192}, \,\,  \widetilde{P}^*(12121) = \frac{6 \alpha - 1}{288}. \notag
\end{align}
\end{lemma}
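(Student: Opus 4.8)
The plan is to regard the ten class representatives as unknowns and to pin them down by the linear relations that any $1$-dependent proper coloring must satisfy, feeding in the length-$\le 4$ data \eqref{cyl4} as known input. Two families of relations are available. The first is Kolmogorov consistency: marginalizing the rightmost (or leftmost) coordinate of a length-$5$ word produces a length-$4$ word, so $\sum_{c} P(x_1x_2x_3x_4\,c) = P(x_1x_2x_3x_4)$ (only $c \neq x_4$ contribute) and symmetrically when summing the first coordinate. The second, and the only place where $1$-dependence enters, is a gluing identity: for an interior position $j \in \{2,3,4\}$ the blocks $\{1,\dots,j-1\}$ and $\{j+1,\dots,5\}$ are at graph distance $2$, hence independent, so marginalizing the symbol at $j$ gives
\[
\sum_{x_j} P(x_1\cdots x_5) = P(x_1\cdots x_{j-1})\, P(x_{j+1}\cdots x_5),
\]
whose right-hand side is a product of already-known cylinder probabilities of length $\le 3$.

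First I would use invariance under color permutations and reflection to collapse every proper length-$5$ word to one of the ten representatives, recording for each word which representative it maps to (this is the combinatorial bookkeeping step, e.g.\ $12434$ equals $12134$ after reversing and relabeling). I would then instantiate the gluing and consistency identities above on a spanning set of words. Several of the ten values fall out immediately and in isolation: for instance gluing at the middle position $j=3$ of $12\,*\,34$ gives $P(12134)+P(12434)=P(12)P(34)=1/144$, and since $12434 \sim 12134$ this reads $2P(12134)=1/144$, forcing $P(12134)=1/288$ as claimed. Iterating over all interior splits and both end-marginalizations produces a linear system in the remaining unknowns whose right-hand sides are explicit in the length-$\le 4$ data and the parameter $\alpha$.

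The remaining step is to solve this system and match the stated formulas. Existence of a solution is not in question---it is guaranteed by the hypothesized coloring---so the substance is uniqueness: I would verify that the accumulated relations have full rank in the ten unknowns, equivalently that the associated homogeneous system (zero length-$\le 4$ data) admits only the zero solution. Once rank is confirmed, back-substituting the known length-$\le 4$ probabilities and solving yields exactly the values in the statement, including the $\alpha$-dependent ones such as $P(12341)=(1+4\alpha)/384$ and $P(12121)=(6\alpha-1)/288$.

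I expect the main obstacle to be the bookkeeping-plus-rank argument rather than any single hard identity: one must enumerate enough independent consistency and gluing equations to cut the ten-dimensional unknown space down to a point, while correctly tracking the action of the color-permutation and reflection symmetries on every word that appears. A secondary subtlety is that the over-determined system must be compatible; this compatibility is automatic given the existence of the coloring, but it is worth confirming that the equations used are mutually consistent and, in particular, that solving for length $5$ introduces no spurious constraint on $\alpha$, since the length-$4$ family is genuinely one-parameter.
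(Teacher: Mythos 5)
Your proposal is correct and follows essentially the same route as the paper, which likewise obtains the length-$5$ values by solving the $1$-dependence (gluing at interior positions) and consistency (end-marginalization) equations for the ten symmetry classes, with the length-$\le 4$ data \eqref{cyl4} as input; your worked example $2P(12134)=P(12)P(34)=1/144$ is a correct instance of this. The paper gives no more detail than ``solving the $1$-dependence and consistency conditions,'' so your outline, including the rank/uniqueness check and the observation that no spurious constraint on $\alpha$ should arise, is if anything more explicit than the original.
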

So the nonnegative condition for cylinder probabilities requires $\frac{1}{6} \le \alpha \le \frac{5}{12}$, which contradicts $\alpha \le 1/8$ given by \eqref{cond2}.

\begin{remark}
As indicated in \cite{HL15}, the construction of a $1$-dependent $q$-coloring of $\mathbb{Z}$ requires finding a nonnegative solution to an infinite set of nonlinear equations. The unknowns are the cylinder probabilities, which are assumed to be symmetric in the colors, translation invariant and invariant under reflection. For each $q \ge 4$ and $k \ge 1$, let
\begin{equation*}
L_{q, k}: = \# \mbox{ unknowns corresponding to the cylinder probabilities of length } k.
\end{equation*}
So $L_{q,n}$ is the number of the equivalent classes of $q$-coloring of $[k]$ under permutations of the colors and under reflection. 
The number of equations for these $L_{q,k}$ unknowns is then $(k-1)L_{q,k}$, among which $(k-2) L_{q,k}$ are $1$-dependence conditions, and $L_{q,k}$ are consistency conditions.
For $q = 4$, the sequence $(L_{4,k}; \, k \ge 1)$ is given by the OEIS A001998 \cite{OEIS}. 
In particular,
\begin{equation}
L_{4, k} = \frac{1+3^{k-1}+3^{(k-2)/2} (2 + \sqrt{3} - (-1)^{k-1} (2 - \sqrt{3}))}{4} \quad \mbox{for } k \ge 1.
\end{equation}
To illustrate, $L_{4,1} = 1$, $L_{4,2} = 1$, $L_{4,3} = 2$, $L_{4,4} = 4$, $L_{4,5} = 10$, $L_{4,6} = 25$, $L_{4,7} = 70$, $L_{4,8} = 196$, $L_{4,9}= 574$, $L_{4,10} = 1681 \ldots$
By further solving the equations for cylinder probabilities of length $k = 6$, we get a two parameter family of $(\alpha, \beta)$ for $L_{4, 6} = 25$ unknowns.  
But it is not obvious there is a unique nonnegative solution to these equations unless the non-negativity conditions force $\alpha = 1/5$ as $k \rightarrow \infty$.
\end{remark}

\section{No symmetric $1$-dependent $4$-coloring of $\mathscr{S}^3_{(1,1,\infty)}$ under the condition $(ii)$}
\label{sc4}

In this section, we deal with Proposition \ref{prop:main}.
The idea is to show that the condition $(ii)$ implies a `negative' probability for some $4$-coloring of $\mathscr{S}^3_{(1,1,\infty)}$.
Suppose by contradiction that such a coloring exists.
Let ${\bf T}_{n}$ be the coloring of $\mathscr{S}^3_{(1,1,n)}$ defined by
\begin{equation}
\label{eq:T12}
{\bf T}_{n}: = \,
\begin{matrix} 
   & 4 &   \\
3 & 1 & \smash[b]{\block{n}}  
\end{matrix} 
\end{equation}

\bigskip
Also define ${\bf R}_n$ and ${\bf R}'_n$ as:
\begin{equation}
\label{eq:Rn}
{\bf R}_n: = \,  \smash[b]{\blockb{n}} \quad \mbox{and} \quad {\bf R}'_n: = \,3\, \smash[b]{\blockb{n-1}}
\end{equation}

\bigskip
We need the following lemma which computes the probabilities of ${\bf R}_n$ and ${\bf R}'_n$.

\begin{lemma}
\label{lem:probR}
Let $P^{*}(\cdot)$ be the family of probabilities defined by \eqref{eq:DCR1}, and ${\bf R}_n$, ${\bf R}'_n$ be defined by \eqref{eq:Rn}.
Then we have
\begin{align}
& P^{*}({\bf R}_n) = \frac{1}{2(n+1) !},  \quad \mbox{for } n \ge 1,  \label{eq:43}\\
& P^{*}({\bf R}'_n) = \frac{1}{4(n+1)(n-1)!}, \quad  \mbox{for } n \ge 2. \label{eq:44}
\end{align}
\end{lemma}

\begin{proof}
It is easily seen from the recursion \eqref{eq:DCR1} that
\begin{equation*}
P^{*}({\bf R}_n) = \frac{1}{2(n+1)} (P^{*}(\widehat{\bf R}_{n,1}) + P^{*}(\widehat{\bf R}_{n,n})) = \frac{P^{*}({\bf R}_{n-1})}{n+1},
\end{equation*}
where the second equality is due to the symmetry of the colors.
This leads to the formula \eqref{eq:43}.
The formula \eqref{eq:44} follows from the fact that $P^{*}({\bf R}'_n) = \frac{1}{2}(P^{*}({\bf R}_{n-1}) - P^{*}({\bf R}_{n}))$.
\end{proof}

Now the $1$-dependence condition at the distinguished vertex implies that 
\begin{equation}
\label{eq:45}
P({\bf T}_{n}) = \frac{1}{16}  P^{*}({\bf R}_n).
\end{equation}

On the other hand, the probability in \eqref{eq:45} are no larger than
$P^{*}(4 \, \smash[b]{\blockb{n+1}}) =  P^{*}({\bf R}'_{n+2})$.
By Lemma \ref{lem:probR}, this is equivalent to
\begin{equation}
\label{eq:contradb}
\frac{1}{32 (n+1) !} \le \frac{1}{4(n+3) (n+1) !},
\end{equation}
which fails for $n > 5$. 
This yields the contradiction.

\section{Construction of a $1$-dependent $q$-coloring of $\mathscr{S}^3_{(1,1,\infty)}$}
\label{sc5}

This section is devoted to the proof of Theorem \ref{thm:main2}.
In Section \ref{sc51} we check the consistency and one-dependence conditions.
The nonnegativity conditions are proved in Section \ref{sc52}.

\subsection{Consistency and one-dependence conditions}
\label{sc51}
We check that $P(\cdot)$ defined by \eqref{eq:DCR2} satisfies the consistency and $1$-dependence conditions. 
This is a tedious case-by-case induction. 
We start with the case $x_L \ne x_R$. 
By symmetry of the colors, it suffices to consider $P\left(\tree{{\bf x}}{1 \, 2 \, 3}\right)$, with ${\bf x} \in [q]^k$ for some $k$.
The results are stated in the following proposition.

\begin{proposition}
\label{prop:51}
Let $P(\cdot)$ be defined as in Theorem \ref{thm:main2}, 
and let $a \in [q]$, ${\bf x} = (x_1, \ldots, x_n) \in [q]^n$ and ${\bf y} = (y_1, \ldots, y_m) \in [q]^m$.
We have
\begin{enumerate}
\item[(i)]
$\sum_{a \ne 2} P \left(\tree{a}{1 \, 2 \, 3} \right) = P(123)$.
\item[(ii)]
$\sum_{a \ne x_n} P\left(\btree{a}{\bf x}{1 \, 2 \, 3}\right) = P \left(\tree{\bf x}{1 \, 2 \, 3} \right)$.
\item[(iii)]
$\sum_{a \ne x_n, y_1} P\left(\ttree{\bf y}{a}{\bf x}{1 \, 2 \, 3} \right) = P^*({\bf y}) P\left(\tree{\bf x}{1 \, 2 \, 3} \right)$.
\item[($iv$)]
$\sum_{a \ne 2, x_1} P\left(\btree{\bf x}{a}{1 \, 2 \, 3 } \right) = P^*({\bf x}) P(123)$.
\item[$(v)$]
$\sum_{a \ne 1,2,x_1} P \left(\tree{\bf x}{1 \, a \, 2} \right) = P^*({\bf x}) P^*(1) P^*(2)$.
\end{enumerate}
\end{proposition}

The consistency and one-dependence of $P\left( \tree{\bf x}{1 \, 2 \, 1}\right)$, as well as the consistency of $x_L$ and $x_R$ can be deduced easily from the subtraction construction \eqref{eq:DCR22}.
We summarize the corresponding results in the following corollary, and leave the details to the readers. 

\begin{corollary}
Let $P(\cdot)$ be defined as in Theorem \ref{thm:main2}, 
and let $a \in [q]$, ${\bf x} = (x_1, \ldots, x_n) \in [q]^n$ and ${\bf y} = (y_1, \ldots, y_m) \in [q]^m$.
We have
\begin{enumerate}
\item[(i)]
$\sum_{a \ne 2} P \left(\tree{a}{1 \, 2 \, 1} \right) = P(121)$.
\item[(ii)]
$\sum_{a \ne x_n} P\left(\btree{a}{\bf x}{1 \, 2 \, 1}\right) = P \left(\tree{\bf x}{1 \, 2 \, 1} \right)$.
\item[(iii)]
$\sum_{a \ne x_n, y_1} P\left(\ttree{\bf y}{a}{\bf x}{1 \, 2 \, 1} \right) = P^*({\bf y}) P\left(\tree{\bf x}{1 \, 2 \, 1} \right)$.
\item[($iv$)]
$\sum_{a \ne 2, x_1} P\left(\btree{\bf x}{a}{1 \, 2 \, 1 } \right) = P^*({\bf x}) P(121)$.
\item[$(v)$]
$\sum_{a \ne 1,x_1} P \left(\tree{\bf x}{1 \, a \, 1} \right) = P^*({\bf x}) P^*(1) P^*(1)$.
\item[$(vi)$]
$\sum_{a \ne x_0} P \left(\tree{\bf x}{x_L \, x_0 \, a} \right) = P^*(x_L x_0 {\bf x})$ \, and \,
$\sum_{a \ne x_0} P \left(\tree{\bf x}{a \, x_0 \, x_R} \right) = P^*(x_R x_0 {\bf x})$.
\end{enumerate}
\end{corollary}

\begin{proof}[Proof of Proposition \ref{prop:51}]
($i$) By the construction \eqref{eq:DCR21}, 
\begin{equation*}
P \left(\tree{a}{1 \, 2 \, 3} \right) = \frac{1}{C(3) D(2)} \left(C(3) P(123) + C(1) P(1a3) \right) \quad \mbox{for } a \ne 1,2,3,
\end{equation*}
\begin{equation*}
P \left(\tree{a}{1 \, 2 \, 3} \right) = \frac{1}{D(2)} P(123)  \quad \mbox{for } a = 1, 3.
\end{equation*}
As a result,
\begin{align*}
\sum_{a \ne 2} P \left(\tree{a}{1 \, 2 \, 3} \right) &= \frac{q-1}{D(2)} P(123) + \frac{C(1)}{C(3)D(2)} \sum_{a \ne 1,2,3} P(1a3) \\
& = \frac{q-1}{q} P(123) + \frac{(q-3) \sqrt{q}/2}{q^{3/2}(q-3)/2} P(123) = P(123).
\end{align*}

($ii$)
By the construction \eqref{eq:DCR21}, 
\begin{multline*}
P\left(\btree{a}{\bf x}{1 \, 2 \, 3}\right) = \frac{1}{C(n+3)D(n+2)} \Bigg( C(2n+3) P\left(\tree{\bf x}{1 \, 2 \, 3} \right) \\
+ \sum_{i=1}^n C(2i + 1) P\left(\btree{a \,\,}{\widehat{\bf x}_i}{1 \, 2 \, 3} \right) + C(1) P \left( \btree{a \,\,\,}{\widehat{\bf x}_1}{1 \, x_1 \, 3}\right)  \Bigg),
\end{multline*}
which implies that 
\begin{align}
\label{eq:51}
& \sum_{a \ne x_n} P\left(\btree{a}{\bf x}{1 \, 2 \, 3}\right) \notag  \\
& = \frac{1}{C(n+3)D(n+2)}\Bigg[ (q-1) C(2n+3) P\left(\tree{\bf x}{1 \, 2 \, 3} \right) + C(2n+1) \sum_{a \ne x_n} P\left(\btree{a \,\,\,}{\widehat{\bf x}_n}{1 \, 2 \, 3} \right) \notag  \\
& \qquad \qquad \qquad  \qquad \qquad + \sum_{i = 1}^{n-1} C(2i + 1) \sum_{a \ne x_n} P\left(\btree{a \,\,\,}{\widehat{\bf x}_i}{1 \, 2 \, 3} \right) + C(1) \sum_{a \ne x_n} P \left( \btree{a \,\,\,}{\widehat{\bf x}_1}{1 \, x_1 \, 3}\right) \Bigg] \notag  \\
& = \frac{1}{C(n+3)D(n+2)}\Bigg[ (q-1) C(2n+3) P\left(\tree{\bf x}{1 \, 2 \, 3} \right) + C(2n+1) \left(P \left(\tree{\widehat{\bf x}_n}{1 \, 2 \, 3}\right) - P \left( \tree{\bf x}{1 \, 2 \, 3}\right) \right) \notag  \\
& \qquad \qquad \qquad  \qquad \qquad \qquad \qquad \quad + \sum_{i = 1}^{n-1} C(2i + 1) P\left(\tree{\widehat{\bf x}_i}{1 \, 2 \, 3} \right) + C(1)  P \left( \tree{\widehat{\bf x}_1}{1 \, x_1 \, 3}\right) \Bigg] \notag  \\
& = \frac{1}{C(n+3)D(n+2)}\Bigg[(q-1) C(2n+3) P\left(\tree{\bf x}{1 \, 2 \, 3} \right) - C(2n+1)P \left( \tree{\bf x}{1 \, 2 \, 3}\right) \notag  \\
& \qquad \qquad \qquad  \qquad \qquad \qquad \qquad \quad +  \underbrace{\sum_{i = 1}^{n} C(2i + 1) P\left(\tree{\widehat{\bf x}_i}{1 \, 2 \, 3} \right) + C(1)  P \left( \tree{\widehat{\bf x}_1}{1 \, x_1 \, 3}\right)}_{= C(n+2) D(n+1) P\left(\tree{\bf x}{1 \, 2 \, 3} \right) \, \mbox{\tiny by induction}} \Bigg] \notag  \\
& = \frac{(q-1) C(2n+3) - C(2n+1) + C(n+2) D(n+1)}{C(n+3) D(n+2)}P\left(\tree{\bf x}{1 \, 2 \, 3} \right).
\end{align}

Now using \eqref{eq:HL2} with $j = 2$, $k = -2n - 3$ and $\ell = 2n + 4$, we have
$\sqrt{q} C(2n+1) = \sqrt{q}(q-1) C(2n+3) - q C(2n+4)$, which implies that
\begin{equation*}
(q-1) C(2n+3) - C(2n+1) = C(2n+4) D(1).
\end{equation*}
Further using \eqref{eq:HL2} with $j = -n-1$, $k = 2n + 4$ and $\ell = -n-2$, we get
\begin{align}
\label{eq:52}
C(n+3) D(n+2) & = C(2n+4) D(1) + C(n+2) D(n+1) \notag  \\
& = (q-1) C(2n+3) - C(2n+1) + C(n+2) D(n+1),
\end{align}
Combining \eqref{eq:51} and \eqref{eq:52} leads to the desired result.

\smallskip
($iii$)
There are two cases.

\smallskip
{\em Case 1:} $x_n = y_1$. We have:
\begin{multline*}
P\left(\ttree{\bf y}{a}{\bf x}{1 \, 2 \, 3} \right) = \frac{1}{C(m+n+3)D(m+n+2)} \Bigg( \sum_{j = 1}^m C(2n+3+2j) P\left(\ttree{\widehat{\bf y}_j}{a}{\bf x}{1 \, 2 \, 3} \right) \\
+ \sum_{i=1}^n C(2i + 1) P\left(\ttree{\bf y \,}{a \,}{\widehat{\bf x}_i}{1 \, 2 \, 3} \right) + C(1) P\left(\ttree{\bf y \,}{a \,}{\widehat{\bf x}_1}{1 \, x_1 \, 3} \right)  \Bigg),
\end{multline*}
which gives
\begin{align}
\label{eq:53}
\sum_{a \ne x_n, y_1} P\left(\ttree{\bf y}{a}{\bf x}{1 \, 2 \, 3} \right) &= \frac{1}{C(m+n+3)D(m+n+2)} \Bigg[ \sum_{j = 1}^m C(2n+3+2j) \sum_{a \ne x_n, y_1} P\left(\ttree{\widehat{\bf y}_j}{a}{\bf x}{1 \, 2 \, 3} \right) \notag  \\
& \qquad \qquad + \sum_{i=1}^n C(2i + 1) \sum_{a \ne x_n, y_1} P\left(\ttree{\bf y \,}{a \,}{\widehat{\bf x}_i}{1 \, 2 \, 3} \right) + C(1) \sum_{a \ne x_n, y_1} P\left(\ttree{\bf y \,}{a \,}{\widehat{\bf x}_1}{1 \, x_1 \, 3} \right)  \Bigg] \notag  \\
& = \frac{1}{C(m+n+3)D(m+n+2)} \Bigg[ \sum_{j = 1}^m C(2n+3+2j) P^*(\widehat{\bf y}_j) P\left(\tree{\bf x}{1 \, 2 \, 3} \right) \notag  \\
& \qquad \qquad + P({\bf y}) \underbrace{\left(\sum_{i = 1}^n C(2i+1) P\left(\tree{\widehat{\bf x}_i}{1 \, 2 \, 3} \right) + C(1) P\left(\tree{\widehat{\bf x}_1}{1 \, x_1 \, 3} \right) \right)}_{= C(n+2) D(n+1) P\left(\tree{\bf x}{1 \, 2 \, 3} \right) \, \mbox{\tiny by induction}} \Bigg].
\end{align}

Now using \eqref{eq:HL2} with $j = 2n+m+4$, $k = -m + 2j -1$ and $\ell = -2j$, we get
\begin{equation*}
-C(2n+2j + 3) D(m+1) = C(m - 2j + 1) D(2n+3) - C(2j) D(2n + m +4),
\end{equation*}
which implies that
\begin{align}
\label{eq:54}
\sum_{j = 1}^m C(2n+3+2j) P^*(\widehat{\bf y}_j) & = \frac{1}{D(m+1)} \Bigg[-D(2n+3) \underbrace{\sum_{j = 1}^m C(m - 2j+1) P^*(\widehat{\bf y}_j)}_{= D(m+1) P^*({\bf y}) \, \mbox{\tiny by induction}} \notag  \\
& \qquad \qquad \qquad \quad + D(2n+m+4) \underbrace{\sum_{j = 1}^m C(2j) P^*(\widehat{\bf y}_j)}_{= D(m+1)C(m+1) \, \mbox{\tiny by \eqref{eq:HL3}}}  \Bigg] \notag  \\
& = \left(C(m+1) D(2n+m+4) - D(m+1) \right) P^*({\bf y}).
\end{align}

By \eqref{eq:HL1}, we have $C(m+1) D(2n+m+4) = \frac{1}{2} (D(2n+2m + 5) + D(m+1))$ so
\begin{align}
\label{eq:55}
C(m+1) D(2n+m+4) - D(m+1) &= \frac{1}{2} (D(2n+2m + 5) - D(m+1)) \notag  \\
& = C(2n+m+4) D(m+1).
\end{align}
Therefore,
$\sum_{a \ne x_n, y_1} P\left(\ttree{\bf y}{a}{\bf x}{1 \, 2 \, 3} \right) = \frac{C(2n+m+4) D(m+1) + C(n+2) D(n+1)}{C(m+n+3) D(m+n+2)} P^*({\bf y}) P\left(\tree{\bf x}{1 \, 2 \, 3} \right)$
by combining \eqref{eq:53}, \eqref{eq:54} and \eqref{eq:55}.
Again using \eqref{eq:HL2} with $k = m+n+3$, $j = n+1$ and $\ell = -n -2$, we get the desired result.

\smallskip
{\em Case 2:} $x_n \ne y_1$.
A similar argument as in Case 1 shows that
\begin{align}
\label{eq:56}
& \sum_{a \ne x_n, y_1} P\left(\ttree{\bf y}{a}{\bf x}{1 \, 2 \, 3} \right) \notag \\
&= \frac{1}{C(m+n+3)D(m+n+2)} \Bigg[ \sum_{j = 2}^m C(2n+3+2j) \sum_{a \ne x_n, y_1} P\left(\ttree{\widehat{\bf y}_j}{a}{\bf x}{1 \, 2 \, 3} \right) \notag \\
& \qquad \qquad \qquad \qquad + C(2n+5) \sum_{a \ne x_n, y_1} P\left(\ttree{\widehat{\bf y}_1}{a}{\bf x}{1 \, 2 \, 3} \right) + (q-2) C(2n+3) P\left(\btree{\bf y}{\bf x}{1 \, 2 \, 3} \right) \notag \\
& \qquad \qquad \qquad \qquad  + C(2n+1) \sum_{a \ne x_n, y_1} P\left(\ttree{\bf y \,}{a \,}{\widehat{\bf x}_n}{1 \, 2 \, 3} \right)
+ \sum_{i=1}^{n-1} C(2i + 1) \sum_{a \ne x_n, y_1} P\left(\ttree{\bf y \,}{a \,}{\widehat{\bf x}_i}{1 \, 2 \, 3} \right) \notag \\
& \qquad \qquad \qquad \qquad + C(1) \sum_{a \ne x_n, y_1} P\left(\ttree{\bf y \,}{a \,}{\widehat{\bf x}_1}{1 \, x_1 \, 3} \right)  \Bigg] \notag \\
& = \frac{1}{C(m+n+3)D(m+n+2)} \Bigg[\sum_{j = 2}^m C(2n+3+2j) P^*(\widehat{\bf y}_j) P\left(\tree{\bf x}{1 \, 2 \, 3} \right) \notag \\
& \qquad + C(2n+5) \left(P^*(\widehat{\bf y}_1) P\left(\tree{\bf x}{1 \, 2 \, 3} \right) - P\left( \btree{\bf y}{\bf x}{1 \, 2 \, 3}  \right)      \right) + (q-2) C(2n+3) P\left(\btree{\bf y}{\bf x}{1 \, 2 \, 3} \right) \notag \\
& \qquad + C(2n+1) \left( P^*({\bf y}) P\left(\tree{\widehat{\bf x}_n}{1 \, 2 \, 3} \right) - P\left( \btree{\bf y}{\bf x}{1 \, 2 \, 3} \right)\right) + \sum_{i = 1}^{n-1} C(2i+1) P^*({\bf y}) P\left(\tree{\widehat{\bf x}_i}{1 \, 2 \, 3} \right) \notag \\
& \qquad + C(1) P\left(\tree{\widehat{\bf x}_1}{1 \, x_1 \, 3} \right) P^*({\bf y}) \Bigg] \notag \\
& = \frac{1}{C(m+n+3)D(m+n+2)} \Bigg[  \sum_{j = 1}^m C(2n+3+2j) \sum_{a \ne x_n, y_1} P\left(\ttree{\widehat{\bf y}_j}{a}{\bf x}{1 \, 2 \, 3} \right) \notag \\
& \quad \qquad  \qquad \qquad \qquad+ ((q-2) C(2n+3) - C(2n+5) -C(2n+1)) P\left(\btree{\bf y}{\bf x}{1 \, 2 \, 3} \right) \notag \\
&  \quad \qquad  \qquad \qquad \qquad +\left(\sum_{i = 1}^n C(2i+1) P\left(\tree{\widehat{\bf x}_i}{1 \, 2 \, 3} \right) + C(1) P\left(\tree{\widehat{\bf x}_1}{1 \, x_1 \, 3} \right) \right) P^*({\bf y}) \Bigg].
\end{align}

Now using \eqref{eq:HL0} with $m = 2$, we get $(q-2)C(2n+3) - C(2n+5) - C(2n+1) = 0$.
So the equation \eqref{eq:56} reduces to \eqref{eq:53}, and the rest follows the same as in Case 1.

\smallskip
($iv$) 
There are three cases.

\smallskip
{\em Case 1:} $x_1 = 1$ or $3$. Assume w.l.o.g. that $x_1 = 1$.  
Then we have:
\begin{align}
\label{eq:57}
& \sum_{a \ne 1,2} P \left(\btree{\bf x}{a}{1 \,2 \, 3} \right) \notag \\
&= \frac{1}{C(n+3) D(n+2)} \Bigg[\sum_{i=2}^n C(2i + 3)\sum_{a \ne 1,2} P\left(\btree{\widehat{\bf x}_i}{a}{1 \, 2 \, 3} \right) + C(5) \sum_{a \ne 1,2} P \left(\btree{\widehat{\bf x}_1}{a}{1 \,2 \, 3} \right) \notag \\
& \qquad \qquad \qquad \qquad \qquad \qquad + (q-2) C(3) P\left(\tree{\bf x}{1 \, 2 \, 3} \right) + C(1) \sum_{a \ne 1,2,3} P \left(\tree{\bf x}{1 \,a \, 3} \right) \Bigg] \notag \\
& = \frac{1}{C(n+3) D(n+2)} \Bigg[\sum_{i=2}^n C(2i + 3) P^*(\widehat{\bf x}_i) P(123) + C(5) \left(P^*(\widehat{\bf x}_1) P(123) - P\left(\tree{\bf x}{1 \, 2 \, 3} \right) \right) \notag  \\
&\qquad \qquad \qquad \qquad \qquad \qquad + (q-2) C(3) P\left(\tree{\bf x}{1 \, 2 \, 3} \right) + C(1) \sum_{a \ne 1,2,3} P \left(\tree{\bf x}{1 \,a \, 3} \right) \Bigg]\notag \\
& =  \frac{1}{C(n+3) D(n+2)}\Bigg[\sum_{i=1}^n C(2i + 3) P^*(\widehat{\bf x}_i) P(123) + \underbrace{((q-2)C(3) - C(5))}_{ = C(1) \, \mbox{\tiny by } \eqref{eq:HL0}}P\left(\tree{\bf x}{1 \, 2 \, 3} \right)\notag \\
& \qquad \qquad \qquad \qquad \qquad \qquad + C(1) \sum_{a \ne 1,2,3} P \left(\tree{\bf x}{1 \,a \, 3} \right) \Bigg] \notag \\
& = \frac{1}{C(n+3) D(n+2)} \Bigg[\sum_{i=1}^n C(2i + 3) P^*(\widehat{\bf x}_i) P(123) + C(1) P^*({\bf x}) \underbrace{P(1) P(3)}_{= (q-2) P(123)}  \Bigg].
\end{align}

Now using \eqref{eq:HL2} with $j = 3$, $k = 2i$ and $\ell = n - 2i +1$, we obtain
\begin{equation*}
C(2i+3) D(n+1) + C(n-2i+1) D(3) = C(2i) D(n+4),
\end{equation*}
which implies that
\begin{align}
\label{eq:58}
\sum_{i = 1}^n C(2i+3) P^*(\widehat{\bf x}_i) &= \frac{1}{D(n+1)} \Bigg[D(n+4) \sum_{i = 1}^n C(2i) P^*(\widehat{\bf x}_i)  - D(3) \sum_{i = 1}^n C(n - 2i + 1) P^*(\widehat{\bf x}_i)  \Bigg] \notag \\
& = \frac{1}{D(n+1)} \Bigg[D(n+1)C(n+1)D(n+4) P^*({\bf x}) - D(n+1) D(n+3)P^*({\bf x}) \Bigg] \notag  \\
& = (C(n+1)D(n+4) - D(3)) P^*({\bf x}),
\end{align}
where the first term in the second equality is due to \eqref{eq:HL3}.
Further by \eqref{eq:HL1}, we have 
\begin{align}
\label{eq:59}
C(n+1)D(n+4) - D(3) &= \frac{1}{2}(D(2n+5) + D(3)) - D(3) \notag \\
& = \frac{1}{2}(D(2n+5) - D(3)) = C(n+4) D(n+1).
\end{align}
Thus,
$\sum_{a \ne 1,2}P \left(\btree{\bf x}{a}{1 \,2 \, 3} \right) = \frac{C(n+4)D(n+1) + (q-2) C(1)}{C(n+3) D(n+2)} P^*({\bf x}) P(123)$ by \eqref{eq:57}, \eqref{eq:58} and \eqref{eq:59}.
Again using \eqref{eq:HL2} with $j = 1$, $k = n+3$ and $\ell = -2$ leads to the desired result.

\smallskip
{\em Case 2:} $x_1 = 2$. We have:
\begin{align}
\label{eq:510}
& \sum_{a \ne 2} P \left(\btree{\bf x}{a}{1 \,2 \, 3} \right) \notag \\
& = \frac{1}{C(n+3)D(n+2)} \Bigg[ \sum_{i = 1}^n C(2i+3) \sum_{a \ne 2} P\left(\btree{\widehat{\bf x}_i}{a}{1 \, 2 \, 3} \right) + C(1) \sum_{a \ne 1,2,3} P\left(\tree{\bf x}{1 \, 2 \, 3} \right)  \Bigg] \notag \\
& = \frac{1}{C(n+3)D(n+2)} \Bigg[ \sum_{i = 1}^n C(2i+3) P^*(\widehat{\bf x}_i)P(123) + C(1) P^*({\bf x}) \underbrace{P(1) P(3)}_{= (q-2) P(123)} \Bigg].
\end{align}
Note that the equation \eqref{eq:510} reduces to \eqref{eq:57}, and the rest follows exactly the same as Case 1.

\smallskip
{\em Case 3:} $x_1 \ne 1,2,3$. Assume w.l.o.g. that $x_1 = 4$.
We then have:
\begin{align}
\label{eq:511}
& \sum_{a \ne 2, 4} P \left(\btree{\bf x}{a}{1 \,2 \, 3} \right) \notag \\
& = \frac{1}{C(n+3)D(n+2)} \Bigg[ \sum_{i = 2}^n C(2i+3) \sum_{a \ne 2,4} P\left(\btree{\widehat{\bf x}_i}{a}{1 \, 2 \, 3} \right) + C(5) \sum_{a \ne 2,4} P\left(\btree{\widehat{\bf x}_1}{a}{1 \, 2 \, 3} \right)  \notag \\
& \qquad \qquad \qquad \qquad \qquad + (q-2) C(3) P\left( \tree{\bf x}{1 \, 2 \, 3}\right) + C(1) \sum_{a = 5}^q P\left(\tree{\bf x}{1 \, a \, 3} \right)  \Bigg] \notag \\
& = \frac{1}{C(n+3)D(n+2)} \Bigg[ \sum_{i = 2}^n C(2i+3) P^*(\widehat{\bf x}_i)P(123) + C(5) \left(P^*(\widehat{\bf x}_1)P(123) - P \left( \tree{\bf x}{1 \, 2 \, 3}\right)\right) \notag \\
& \qquad \qquad \qquad \qquad \qquad + ((q-2)C(3) - C(5))P \left( \tree{\bf x}{1 \, 2 \, 3}\right) + C(1) \sum_{a = 5}^q P\left(\tree{\bf x}{1 \, a \, 3} \right) \Bigg] \notag \\
& = \frac{1}{C(n+3)D(n+2)} \Bigg[ \sum_{i = 2}^n C(2i+3) P^*(\widehat{\bf x}_i)P(123) + C(1) P^*({\bf x}) P(1)P(3) \Bigg].
\end{align}

The equation \eqref{eq:511} reduces to \eqref{eq:57}, and the remaining again follows Case 1.

\smallskip
($v)$
There are two cases.

\smallskip
{\em Case 1:} $x_1 \ne 1,2$. Assume w.l.o.g. that $x_1 = 3$. 
We have:
\begin{align}
\label{eq:512}
\sum_{a \ne 1,2,3} P\left( \tree{\bf x}{1 \, a \, 2}\right) & = \frac{1}{C(n+2)D(n+1)} \Bigg[ \sum_{i = 2}^n C(2i+1) \sum_{a \ne 1,2,3} P\left(\tree{\widehat{\bf x}_i}{1 \, a \, 2} \right) \notag \\
& \qquad \qquad \qquad + C(3) \sum_{a \ne 1,2,3} P\left(\tree{\widehat{\bf x}_1}{1 \, a \, 2} \right) + (q-3)C(1)P\left( \tree{\widehat{\bf x}_1}{1 \, x_1 \, 2}\right) \Bigg] \notag \\
& = \frac{1}{C(n+2)D(n+1)} \Bigg[\sum_{i = 2}^n C(2i+1) P^*(\widehat{\bf x}_i)P^*(1)P^*(2) \notag \\
& \qquad + C(3) \left(P^*(\widehat{\bf x}_1)P^*(1)P^*(2) - P\left(\tree{\widehat{\bf x}_i}{1 \, a \, 2} \right) \right) + (q-3)C(1)P\left( \tree{\widehat{\bf x}_1}{1 \, x_1 \, 2}\right) \Bigg] \notag \\
& = \frac{1}{C(n+2)D(n+1)}\Bigg[\sum_{i = 1}^n C(2i+1) P^*(\widehat{\bf x}_i)P^*(1)P^*(2) \notag \\
&  \quad \qquad \qquad \qquad \qquad + \underbrace{((q-3)C(1) - C(3))}_{ = 0} P\left( \tree{\widehat{\bf x}_1}{1 \, x_1 \, 2}\right)  \Bigg].
\end{align}

Now using \eqref{eq:HL2} with $j = 1$, $k = 2i$ and $\ell = n - 2i + 1$, we get
\begin{equation*}
C(2i + 1) = \frac{1}{D(n+1)} (C(2i) D(n+2) - C(n-2i +1) D(1)).
\end{equation*}
This implies that
\begin{align}
\label{eq:513}
\sum_{i = 1}^n C(2i+1) P^*(\widehat{\bf x}_i) & = \frac{1}{D(n+1)}\Bigg[D(n+2) \underbrace{\sum_{i = 1}^n C(2i) P^*(\widehat{\bf x}_i)}_{= C(n+1)D(n+1)P^*(\bf{x})}
- D(1) \underbrace{\sum_{i = 1}^n C(n-2i+1)P^*(\widehat{\bf x}_i)}_{= D(n+1)P^*(\bf{x})} \Bigg] \notag \\
& = (C(n+1) D(n+2) - D(1)) P^*(\bf{x}) \notag \\
& = \left[\frac{1}{2}(D(2n+3) + D(1)) - D(1) \right] P^*({\bf x}) =  C(n+2) D(n+1) P^*({\bf x}).
\end{align}
Combining \eqref{eq:512} and \eqref{eq:513} yields the desired result.

\smallskip
{\em Case 2:} $x_1 = 1$ or $2$. Assume w.l.o.g. that $x_1 = 1$. 
We then get
\begin{align}
\label{eq:514}
\sum_{a \ne 1,2,3} P\left( \tree{\bf x}{1 \, a \, 2}\right) &= \frac{1}{C(n+2)D(n+1)} \sum_{i = 1}^n C(2i+1) \sum_{a \ne 1,2} P\left(\tree{\widehat{\bf x}_i}{1 \, a \, 2} \right) \notag \\
& = \frac{1}{C(n+2)D(n+1)} \sum_{i = 1}^n C(2i+1) P^*(\widehat{\bf x}_i)P^*(1)P^*(2).
\end{align}
The equation \eqref{eq:514} reduces to \eqref{eq:512}, and the rest follows the same as Case 1.
\end{proof}

\subsection{Nonnegativity conditions}
\label{sc52}
Now we prove that for $q \ge 5$, $P(\cdot)$ given by \eqref{eq:DCR2} defines a probability measure on the proper colorings of $\mathscr{S}^3_{(1,1,\infty)}$, or equivalently each term of the family $P(\cdot)$ is nonnegative.
By the deletion-concatenation recursion \eqref{eq:DCR21}, the terms $P\left( \tree{\bf x}{x_L \, x_0 \, x_R}\right)$ for $x_L \ne x_R$ are automatically nonnegative. 
So it remains to check the case $x_L = x_R$.
Again by symmetry of the colors, we consider $P\left(\tree{\bf x}{1 \, 2 \, 1} \right)$, with ${\bf x} = (x_1, \ldots, x_n)$ for some $n$.

The key idea is to find a sequence $(\alpha_n; \, n \in \mathbb{N}) \in (0,1)^{\mathbb{N}}$ such that 
for each coloring $\tree{\bf x}{1 \, 2 \, 1}$ with ${\bf x}$ of length $n$,
\begin{equation}
\label{eq:nnest}
P^*(12{\bf x}) - P \left(\tree{\bf x}{1 \, 2 \, 1} \right) \le \alpha_n  P^*(12\bf{x}),
\end{equation}
which implies the nonnegativity of $P\left(\tree{\bf x}{1 \, 2 \, 1}\right)$.
The sequence $(\alpha_n; \, n \in \mathbb{N})$ is defined by recursion to match the induction.
The following lemma gives the inductive step.

\begin{lemma}
\label{lem:induction}
Assume that the inequality \eqref{eq:nnest} holds for all ${\bf x}$ of length $n-1$, with $\alpha_{n-1} > 0$.
Then for each coloring $\tree{\bf x}{1 \, 2 \, 1}$ with ${\bf x}$ of length $n$, we have
\begin{equation}
\label{eq:induction}
P^*(12{\bf x}) - P \left(\tree{\bf x}{1 \, 2 \, 1} \right)  \le \alpha_{n-1} P^*(12{\bf x}) + \frac{ \sqrt{q} \, \alpha_{n-1}}{2 C(n+2) D(n+1)} \left(q  P^*(12{\bf x}) - P^*(2{\bf x})\right).
\end{equation}
\end{lemma}

\begin{proof}
First we claim that 
\begin{equation}
\label{eq:516}
P^*(12{\bf x}) - P\left(\tree{\bf x}{1 \, 2 \, 1} \right) \le \frac{\alpha_{n-1}}{C(n+2)D(n+1)} \left(C(1) P^*(1{\bf x}) + \sum_{i = 1}^n  C(2i + 1) P^*(12 \widehat{\bf x}_i)\right)
\end{equation}
We distinguish two cases: $x_1 = 1$ and $x_1 \ne 1$.

\smallskip
{\em Case 1:} $x_1 = 1$. We have
\begin{align*}
P^*(12{\bf x}) - P\left(\tree{\bf x}{1 \, 2 \, 1} \right) &= \sum_{a \ne 1,2} P\left(\tree{\bf x}{1 \, 2 \, a} \right) \\
& = \sum_{a \ne 1,2} \frac{1}{C(n+2)D(n+1)} \sum_{i = 1}^n C(2i+1) P\left(\tree{\widehat{\bf x}_i}{1 \, 2 \, a} \right) \\
& = \frac{1}{C(n+2)D(n+1)} \sum_{i = 1}^n C(2i+1) \left(P^*(12\widehat{\bf x}_i) - P\left(\tree{\widehat{\bf x}_i}{1 \, 2 \, 1} \right) \right) \\
& \le \frac{\alpha_{n-1}}{C(n+2)D(n+1)} \sum_{i = 1}^n C(2i+1)P^*(12\widehat{\bf x}_i),
\end{align*}
where the second equality is due to the recursion \eqref{eq:DCR21}, and the last inequality is by the induction.
So the inequality \eqref{eq:516} holds since $P(1{\bf x}) = 0$ in this case.

\smallskip
{\em Case 2:} $x_1 \ne 1$. Similar to Case 1, we obtain
\begin{align*}
& P^*(12{\bf x}) - P\left(\tree{\bf x}{1 \, 2 \, 1} \right) \\
& = \sum_{a \ne 1,2} \frac{1}{C(n+2)D(n+1)} \left(\sum_{i = 1}^n C(2i+1) P\left(\tree{\widehat{\bf x}_i}{1 \, 2 \, a} \right) + C(1) 
P\left(\tree{\widehat{\bf x}_1}{1 \, x_1 \, a} \right) \right) \\
& = \frac{1}{C(n+2)D(n+1)} \Bigg[ \sum_{i = 1}^n C(2i+1) \left(P^*(12\widehat{\bf x}_i) - P\left(\tree{\widehat{\bf x}_i}{1 \, 2 \, 1} \right) \right) \\
& \qquad \qquad \qquad \qquad \qquad + C(1) \left(P^*(1{\bf x}) - P\left(\tree{\widehat{\bf x}_1}{1 \, x_1 \, 1}\right) - P\left(\tree{\widehat{\bf x}_1}{1 \, x_1 \, 2}  \right) \right) \Bigg] \\
& \le \frac{\alpha_{n-1}}{C(n+2)D(n+1)}\left( C(1)P^*(1{\bf x}) + \sum_{i = 1}^n C(2i+1)P^*(12\widehat{\bf x}_i) \right).
\end{align*}

Now by \eqref{eq:HL3}, we have
\begin{equation}
\label{eq:517}
C(2) P^*(2{\bf x}) + C(4) P^*(1{\bf x}) + \sum_{i = 1}^n C(2i+4) P^*(12\widehat{\bf x}_i) = C(n+3)D(n+3)P^*(12{\bf x}).
\end{equation}
Further using \eqref{eq:HL2} with $j = -3$, $k = 2i+4$ and $\ell = n - 2i -1$, we get
\begin{equation}
\label{eq:518}
C(2i+1) D(n+3) = C(2i+4) D(n) + C(n-2i-1) D(3).
\end{equation}
Combining \eqref{eq:517} and \eqref{eq:518} yields
\begin{align}
\label{eq:519}
& C(1)P^*(1{\bf x}) + \sum_{i = 1}^n C(2i+1) P^*(12\widehat{\bf x}_i) \notag \\
& = \frac{1}{D(n+3)} \Bigg[  D(n) \left(C(n+3)D(n+3)P^*(12{\bf x}) -  C(2) P^*(2{\bf x})  \right)\notag \\
&  \qquad \qquad \qquad \quad + D(3) (D(n+3) P^*(12{\bf x}) - C(n+1) P^*(2{\bf x})) \Bigg] \notag \\
& = (D(n)C(n+3) + D(3)) P^*(12{\bf x}) - \frac{C(2)D(n) + C(n+1) D(3)}{D(n+3)} P^*(2{\bf x}).
\end{align}
Now using \eqref{eq:HL2} with $j = 3$, $k = n$ and $\ell = 0$, we get
$C(n+3) D(n) + D(3) = C(n) D(n+3)$. 
Also using \eqref{eq:HL2} with $j = n$, $k = 1$ and $\ell = 2$,  we get
$C(n+1) D(3) + C(2) D(n) = C(1) D(n+3)$. 
Injecting these equalities into \eqref{eq:519}, we obtain
\begin{equation}
\label{eq:520}
C(1)P^*(1{\bf x}) + \sum_{i = 1}^n C(2i+1) P^*(12\widehat{\bf x}_i) = C(n) D(n+3) P^*(12{\bf x}) - C(1) P^*(2{\bf x}).
\end{equation}
Injecting \eqref{eq:520} into \eqref{eq:516} gives
\begin{equation*}
P^*(12{\bf x}) - P\left(\tree{\bf x}{1 \, 2 \, 1} \right) \le \frac{\alpha_{n-1}}{C(n+2) D(n+1)}  \left(  C(n) D(n+3) P^*(12{\bf x}) - C(1) P^*(2{\bf x}) \right).
\end{equation*}
Finally, using \eqref{eq:HL2} with $j = 2$, $k = n$ and $\ell = 1$, we get
$C(n) D(n+3) = (n+2)D(n+1) C(1) D(2)$, which leads to the desired result.
\end{proof}

In view of \eqref{eq:induction}, the goal is to bound $q P(12{\bf x}) - P(2{\bf x})$, or equivalently to bound $P(12{\bf x})$ in terms of $P(2{\bf x})$. 
The following proposition gives such a bound.

\begin{proposition}
\label{prop:boundplus}
Let $q \ge 4$. For each $q$-coloring ${\bf x}$ of a finite length, we have:
\begin{equation}
\label{eq:impbound}
P^*(a{\bf x}) \le \frac{2}{q+ \sqrt{q(q-4)}} P^*({\bf x}).
\end{equation}
\end{proposition}

\begin{proof}
The idea is again to find a sequence $(\beta_n; \, n \in \mathbb{N})$ inductively, which satisfies 
$P^*(a{\bf x}) \le \beta_n P^*({\bf x})$.
For ${\bf x}$ a coloring of length $n$, we have
\begin{align}
\label{eq:523}
P^*(a{\bf x}) &= \frac{1}{D(n+2)} \left(C(n) P^*({\bf x}) + \sum_{i= 1}^n C(n-2i) P^*(a\widehat{\bf x}_i)   \right) \notag \\
& \le \frac{1}{D(n+2)} \left(C(n) P^*({\bf x}) + \beta_{n-1}\sum_{i= 1}^n C(n-2i) P^*(\widehat{\bf x}_i)   \right)
\end{align}
where the second inequality is obtained by induction.
Now using \eqref{eq:HL2} with $j = 1$, $k = n-2i$ and $\ell = 2i$, we get
\begin{equation*}
C(n - 2i) = \frac{1}{D(n+1)} (C(n-2i + 1) D(n) + C(2i) D(1)),
\end{equation*}
which implies that
\begin{align}
\label{eq:524}
& \sum_{i= 1}^n C(n-2i) P^*(\widehat{\bf x}_i)  \notag\\
& = \frac{1}{D(n+1)} \Bigg[D(n) \underbrace{\sum_{i = 1}^n C(n-2i+1) P^*(\widehat{\bf x}_i)}_{= D(n+1) P^*({\bf x})} + D(1) \underbrace{\sum_{i = 1}^n C(2i) P^*(\widehat{\bf x}_i)}_{= C(n+1) D(n+1) P^*({\bf x})} \Bigg] \notag\\
& = (D(n) + C(n+1)D(1)) P^*({\bf x}) \notag\\
& = (D(n+2) - C(n+1) D(1)) P^*({\bf x}).
\end{align}
Combining \eqref{eq:523} and \eqref{eq:524} yields
\begin{equation*}
P^*(a{\bf x}) \le \left( \beta_{n-1} + \frac{C(n) - \beta_{n-1}\sqrt{q}C(n+1)}{D(n+2)}\right) P^*({\bf x}).
\end{equation*}
It suffices to take
\begin{equation*}
\beta_n = \beta_{n-1} + \frac{C(n) - \beta_{n-1}\sqrt{q}C(n+1)}{D(n+2)}, \quad \mbox{with } \beta_0 = \frac{1}{q}.
\end{equation*}
Elementary analysis shows that for $q > 4$, 
$\beta_n \uparrow \frac{2}{q+ \sqrt{q(q-4)}}$ as $n \rightarrow \infty$, where $\frac{q+ \sqrt{q(q-4)}}{2}$ is the limit of the sequence $\sqrt{q}C(n+1)/C(n)$. 
The result follows immediately.
\end{proof}

It is a consequence of Proposition \ref{prop:boundplus} that $P^*(2{\bf x}) \ge \frac{q+ \sqrt{q(q-4)}}{2} P^*(12{\bf x})$.
Together with Lemma \ref{lem:induction}, we get the inductive estimate:
\begin{equation*}
P^*(12{\bf x}) - P \left(\tree{\bf x}{1 \, 2 \, 1} \right) \le \alpha_{n-1} \left(1 + \frac{q}{(\sqrt{q} + \sqrt{q-4})C(n+2) D(n+1)} \right) P^*(12\bf{x}).
\end{equation*}
So we take
\begin{equation}
\label{eq:alpharec}
\alpha_n = \alpha_{n-1} \left(1 + \frac{q}{(\sqrt{q}+\sqrt{q-4})C(n+2) D(n+1)} \right), \quad \mbox{with } \alpha_0 = \frac{q-1}{q}.
\end{equation}
It is clear that the sequence $(\alpha_n; \, n \in \mathbb{N})$ defined by \eqref{eq:alpharec} is increasing.
To conclude, we need to show that the limit of $(\alpha_n; \, n \in \mathbb{N})$ is bounded from above by $1$. 
That is,
\begin{equation}
\label{eq:526}
\frac{q-1}{q} \prod_{n = 1}^{\infty} \left(1 + \frac{q}{(\sqrt{q} +\sqrt{q-4})C(n+2) D(n+1)} \right) \le 1
\end{equation}
Recall the definition of $C(n)$, $D(n)$ from \eqref{eq:CD}.
It is not hard to see that for each $q \ge 5$ and $n \ge 1$,
\begin{align*}
& C(n) \ge \frac{1}{2} \left(\frac{\sqrt{q} + \sqrt{q-4}}{2} \right)^n \ge \frac{1}{2} (q - 2.4)^\frac{n}{2}, \\
& D(n) \ge \left(\frac{\sqrt{q} + \sqrt{q-4}}{2} \right)^n \ge (q - 2.4)^\frac{n}{2}.
\end{align*}
Thus, for $q \ge 6$,
\begin{align*}
\prod_{n = 1}^{\infty} \left(1 + \frac{q}{(\sqrt{q} +\sqrt{q-4})C(n+2) D(n+1)} \right) & \le \prod_{n = 1}^{\infty} \left( 1 + \frac{q}{(q-2.4)^2}\, \frac{1}{(q - 2.4)^n} \right) \\
& \le \exp\left(\sum_{n = 1}^{\infty} \frac{6}{3.6^2} \, \frac{1}{(q - 2.4)^n} \right) \\
& = \exp \left(\frac{6}{3.6^2 (q-3.4)} \right).
\end{align*}
It is easily checked that $\max_{x \ge 6} \frac{x-1}{x}\exp \left(\frac{6}{3.6^2 (x-3.4)} \right) < 1$, 
see Figure \ref{fig:plot} for a plot of the function $x \to \frac{x-1}{x}\exp \left(\frac{6}{3.6^2 (x-3.4)} \right)$.

\begin{figure}[h]
\label{fig:plot}
\includegraphics[width=0.45 \textwidth]{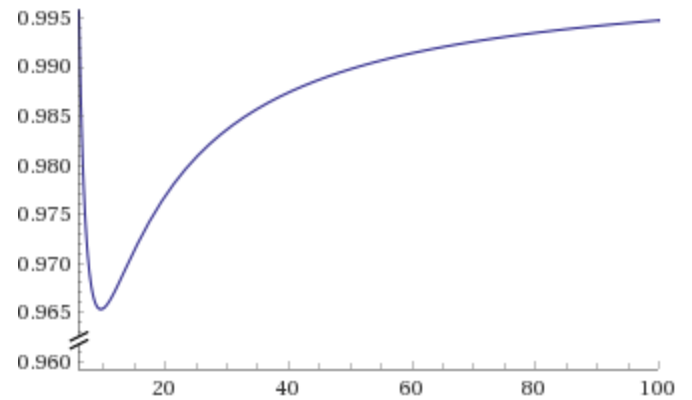}
\caption{Plot of $x \to \frac{x-1}{x}\exp \left(\frac{6}{3.6^2 (x-3.4)} \right)$ on $[6, 100]$}
\end{figure}

It remains to prove the inequality \eqref{eq:526} for $q = 5$. 
In fact, there is a tighter bound $D(n) \ge 2 \cdot (5 - 2.4)^{\frac{n}{2}}$ for $q = 5$ and $n \ge 2$. 
Thus, we get
\begin{align*}
\prod_{n = 1}^{\infty} \left(1 + \frac{5}{(\sqrt{5}+1) C(n+2) D(n+1)} \right)  & \le 
\left(1 + \frac{1}{5 + \sqrt{5}} \right)\prod_{n = 2}^{\infty} \left( 1 + \frac{2.5}{(5-2.4)^{n+2}} \right) \\
& \approx 1.242 < \frac{5}{4},
\end{align*}
which leads to the desired result.

\section{Open problems}
\label{sc6}

In this section, we give a few open problems related to finitely dependent colorings of star graphs
which complement those in \cite{HHL20, HL15, HL16}.

\begin{enumerate}[itemsep = 3 pt]
\item
Prove or disprove the statement that there is no $1$-dependent $4$-coloring of $\mathscr{S}^3_{(m,n,\infty)}$ for any $m, n > 0$ which is invariant under permutations of the colors, and whose restriction to any path is a symmetric $1$-dependent $4$-coloring.
This problem is related to the uniqueness of symmetric $1$-dependent colorings of $\mathbb{Z}$.
\item
Is it possible to give a probabilistic construction of the $1$-dependent coloring \eqref{eq:DCR2} of $\mathscr{S}^3_{(1,1,\infty)}$ in a similar spirit to that in \cite{HHL20} ?
\item
Is it possible to construct a $1$-dependent $q$-coloring for any $q$ of $\mathscr{S}^3_{(m,n,\infty)}$ for $m > 1$ and $n \ge 1$, which satisfies the conditions $(i)$-$(ii)$ ?
 \item
Is it possible to construct a symmetric $k$-dependent $q$-coloring for any $k, q$ of $\mathscr{S}^d$ ?
\end{enumerate}

Since the star graphs are building components of many other graphs, answering any of the above questions will enhance our understandings of finitely dependent colorings of general graphs.

\begin{acks}[Acknowledgments]
The authors thank Alexander Holroyd for helpful discussions.
The authors also thank an anonymous reviewer for constructive suggestions which improve the presentation of the paper.
\end{acks}

\begin{funding}
The second author was supported by NSF Grants DMS-2113779 and DMS-2206038, and by a start-up grant at Columbia University.
\end{funding}



\bibliographystyle{imsart-number} 
\bibliography{unique}       

\end{document}